\numberwithin{equation}{section} \allowdisplaybreaks
\begin{document}
\newtheorem{theorem}{Theorem}[section]
\newtheorem{defin}{Definition}[section]
\newtheorem{prop}{Proposition}[section]
\newtheorem{corol}{Corollary}[section]
\newtheorem{lemma}{Lemma}[section]
\newtheorem{rem}{Remark}[section]
\newtheorem{example}{Example}[section]
\title{Quasi-classical generalized CRF structures}
\author{{\small by}\vspace{2mm}\\Izu Vaisman}
\date{}
\maketitle
{\def\thefootnote{*}\footnotetext[1]%
{{\it 2010 Mathematics Subject Classification: 53C15, 53D17}.
\newline\indent{\it Key words and phrases}: Generalized complex structure. Generalized CRF structure. Holomorphic Poisson structure}}
\begin{center} \begin{minipage}{12cm}
A{\footnotesize BSTRACT. In an earlier paper, we studied manifolds $M$ endowed with a generalized F structure $\Phi\in End(TM\oplus T^*M)$, skew-symmetric with respect to the pairing metric, such that $\Phi^3+\Phi=0$. Furthermore, if $\Phi$ is integrable (in some well-defined sense), $\Phi$ is a generalized CRF structure. In the present paper we study quasi-classical generalized F and CRF structures, which may be seen as a generalization of the holomorphic Poisson structures (it is well known that the latter may also be defined via generalized geometry). The structures that we study are equivalent to a pair of tensor fields $(A\in End(TM),\pi\in\wedge^2TM)$ where $A^3+A=0$ and some relations between $A$ and $\pi$ hold. We establish the integrability conditions in terms of $(A,\pi)$. They include the facts that $A$ is a classical CRF structure, $\pi$ is a Poisson bivector field and $im\,A$ is a (non)holonomic Poisson submanifold of $(M,\pi)$. We discuss the case where either $ker\,A$ or $im\,A$ is tangent to a foliation and, in particular, the case of almost contact manifolds. Finally, we show that the dual bundle of $im\,A$ inherits a Lie algebroid structure and we briefly discuss the Poisson cohomology of $\pi$, including an associated spectral sequence and a Dolbeault type grading.}
\end{minipage} \end{center} \vspace{5mm}
\section{Introduction}
In this paper, the differentiable manifolds and the differential geometric objects are $C^\infty$-smooth and the notation is that used in most textbooks on differential geometry.

Generalized geometry \cite{H} is the geometry of structures defined on the big tangent bundle $\mathbf{T}M=TM\oplus T^*M$ of a differentiable manifold $M^m$, endowed with the pairing metric
\begin{equation}\label{pairingg} g((X,\alpha),(Y,\beta))=\frac{1}{2}(\alpha(Y)+\beta(X))
\end{equation}
and the Courant bracket
\begin{equation}\label{Cbracket} [(X,\alpha),(Y,\beta)]=([X,Y],L_X\beta-L_Y\alpha+\frac{1}{2}d(\alpha(Y)-\beta(X))).
\end{equation}

In generalized geometry the most popular subject is generalized complex structures \cite{Galt}. A generalized almost complex structure is a complex, maximal $g$-isotropic subbundle	 $L\subseteq\mathbf{T}^cM=T^cM\oplus T^{*c}M$ such that $L\cap\bar{L}=0$\footnote{The upper index $c$ denotes complexification and the bar denotes complex conjugation.}. Equivalently, the structure is defined by the endomorphism $\mathcal{J}$ of $\mathbf{T}M$ with $\pm i$-eigenbundles $L,\bar{L}$, which is characterized by the properties (i) $\mathcal{J}$ is $g$-skew-symmetric, (ii) $\mathcal{J}^2=-Id.$ Furthermore, the structure is integrable, or generalized complex, if $L$ is closed under the Courant bracket.

The endomorphism $\mathcal{J}$ may be expressed by a matrix of classical tensors \cite{{Galt},{V-gcm}}, which we call the {\it tensor components} of $\mathcal{J}$,
\begin{equation}\label{matriceJ} \mathcal{J}\left(
\begin{array}{c}X\vspace{2mm}\\ \alpha \end{array}
\right) = \left(\begin{array}{cc} A&\sharp_\pi\vspace{2mm}\\
\flat_\sigma&-A^*\end{array}\right)
\left( \begin{array}{c}X\vspace{2mm}\\
\alpha \end{array}\right) \end{equation} where $X\in TM,\alpha\in
T^*M$, $A\in End(TM)$, $\sigma\in\Omega^2(M)$ ($\Omega^k(M)$ is the space of differential $k$-forms of $M$), $\pi\in\chi^2(M)$ ($\chi^k(M)$ is the space of $k$-vector fields, i.e., contravariant, skew-symmetric tensor fields of $M$), $\flat_\sigma X=i(X)\sigma$, $\sharp_\pi\alpha=i(\alpha)\pi$ and $*$ denotes transposition ($A^*\alpha=\alpha\circ A$). Then, $g$-skew-symmetry holds and $\mathcal{J}^2=-Id$ is equivalent to
\begin{equation}\label{A2} A^2+\sharp_\pi\circ\flat_\sigma=-Id,\;\sharp_\pi\circ A^*=A\circ\sharp_\pi,\;\flat_\sigma\circ A=A^*\circ\flat_\sigma.\end{equation}

The integrability of the structure is equivalent to the set of conditions \cite{{Cr},{V-gcm}}
\begin{equation}\label{Crainic} \begin{array}{l}[\pi,\pi]=0,\; R_{(\pi,A)}=0,\; \mathcal{N}_A(X,Y)=\sharp_\pi[i(X\wedge Y)d\sigma],\vspace*{2mm}\\ d\sigma_A(X,Y,Z)=\sum_{Cycl(X,Y,Z)} d\sigma(AX,Y,Z),\end{array}\end{equation}
where $[\,,\,]$ is the Schouten-Nijenhuis bracket \cite{Psgeom},
\begin{equation}\label{NijA} \mathcal{N}_A(X,Y)=[AX,AY]-A[X,AY]-A[AX,Y]+A^2[X,Y]\end{equation}
is the Nijenhuis tensor,
\begin{equation}\label{Schouten}R_{(\pi,A)}(X,\alpha)=\sharp_\pi[L_X(A^*\alpha) -L_{AX}\alpha]-(L_{\sharp_\pi\alpha}A)(X) \end{equation}
is the Schouten concomitant ($L$ denotes Lie derivative) and $\sigma_A(X,Y)=\sigma(AX,Y)$.

If $\mathcal{J}(TM)\subseteq TM$ and $\mathcal{J}(T^*M)\subseteq T^*M$, equivalently, $\sigma=0,\pi=0$, $\mathcal{J}$ reduces to a classical almost complex structure $A$ and the integrability conditions (\ref{Crainic}) reduce to the integrability condition $\mathcal{N}_A=0$.

We propose to call $\mathcal{J}$ a {\it quasi-classical} structure if $\mathcal{J}(TM)\subseteq TM$, equivalently, $\sigma=0$. It is known that in this case the integrability conditions (\ref{Crainic}) are equivalent to the fact that $A$ is a complex structure and $\pi$ is a holomorphic Poisson structure on $(M,A)$, e.g., \cite{Gengoux}. Indeed, the third condition (\ref{Crainic}) becomes $\mathcal{N}_A=0$, i.e., $A$ is complex and $M$ has the local, complex analytic coordinates $(z^i)$. Then, since $R$ is a tensor, it suffices to check $R=0$ for $X=\partial/\partial z^i,\alpha=dz^j$ and $X=\partial/\partial z^i,\alpha=d\bar{z}^j$. In the first case, $R=0$ becomes
$$A[\sharp_\pi(dz^j),\frac{\partial}{\partial z^i}]=i[\sharp_\pi(dz^j),\frac{\partial}{\partial z^i}],$$ which holds iff $\pi$ is a holomorphic Poisson structure, because $\pi$ has no component of complex type $(1,1)$ and $\sharp_\pi(dz^j)$ is an $i$-eigenvector of $A$. In the second case, $R=0$ is implied by $\pi^{ij}=\pi^{ij}(z^k)$, which, in turn, is implied by $[\pi,\pi]=0$.

In \cite{VCRF} we studied a more general type of generalized structure that corresponds to K. Yano's F structure \cite{Y}. An F structure is an endomorphism $A\in End(TM)$ such that $A^3+A=0$. Then, $A$ has the eigenvalues $\pm i,0$ with the corresponding eigenbundles
$H,\bar{H}\subseteq T^cM,\,Q=ker\,A\subseteq TM$ and with $P=im\,A\subseteq TM$ such that $P^c=H\oplus\bar{H}$ and $TM=P\oplus Q$. The projections that correspond to the decomposition $T^cM=H\oplus\bar{H}\oplus Q^c$ are given by
\begin{equation}\label{projPQ} pr_H=-\frac{1}{2}(A^2+iA),\, pr_{\bar{H}}=-\frac{1}{2}(A^2-iA),\, pr_Q=A^2+Id,\,pr_P=-A^2.\end{equation}
Furthermore, $H$ is an almost CR structure and, if it is closed under the Lie bracket, it is a CR structure \cite{DT}. The CR condition is equivalent to
\begin{equation}\label{integrcuA} \mathcal{S}_A(X,Y)=[AX,AY]+A[AX,A^2Y]+A[A^2X,AY]-[A^2X,A^2Y]=0.
\end{equation}
Indeed, (\ref{integrcuA}) holds on arguments that are eigenvectors iff $H$ is closed under the Lie bracket. If (\ref{integrcuA}) holds, we will say that $A$ is an {\it F structure of the CR type}.

A {\it generalized F structure} is defined by an endomorphism $\Phi$ of $\mathbf{T}M$, which is $g$-skew-symmetric and satisfies the condition $\Phi^3+\Phi=0$. Thus, the eigenvalues of $\Phi$ are $\pm i,0$. We may represent $\Phi$ by the right hand side of (\ref{matriceJ}) but, the conditions on the tensor fields $A,\pi,\sigma$ will be different \cite{VCRF}. Equivalently, the structure may be defined by the $\pm i$-eigenbundles $E,\bar{E}$ and the $0$-eigenbundle $S$, where $E$ is a complex $g$-isotropic (possibly not maximal) subbundle of $\mathbf{T}^cM$ such that $E\cap\bar{E}^{\perp_g}=0$ \cite{VCRF}. Notice that $S^c=E^{\perp_g}\cap\bar{E}^{\perp_g}$ and we have the decomposition $\mathbf{T}^cM=E\oplus\bar{E}\oplus S^c$. The projections of $\mathbf{T}^cM$ on $E,\bar{E},S^c$ are defined by formulas (\ref{projPQ}) where $A$ is replaced by $\Phi$.

The structure defined by $\Phi$ is said to be integrable and, then, it is called a {\it generalized CRF structure} if the subbundle $E$ is closed under Courant brackets. As in the classical case, by looking at arguments that belong to the various eigenbundles of $\Phi$, we see that the integrability condition is equivalent to $\mathcal{S}_\Phi((X,\alpha),(Y,\beta))=0$, where $\mathcal{S}_\Phi$ is defined by (\ref{integrcuA}) with $A$ replaced by $\Phi$ and Lie brackets replaced by Courant brackets \cite{VCRF}. $C^\infty(M)$-bilinearity of $\mathcal{S}_\Phi$ follows from $\Phi^3+\Phi=0$.

If a generalized F structure $\Phi$ preserves the tangent and cotangent bundle of $M$, we have $\pi=0,\sigma=0$ and $\Phi$ may be identified with a classical F structure $A$. Then, if $\Phi$ is integrable $A$ is called a {\it classical CRF structure}. The conditions that characterize a classical CRF structure are stronger than the demand that the $i$-eigenbundle $H$ of $A$ is CR, namely, these conditions are \cite{VCRF},
\begin{equation}\label{clasintegr1} [H,H]\subseteq H,\;\;[H,Q^c]\subseteq H\oplus Q^c,
\end{equation} equivalently,
\begin{equation}\label{clasintegr2} \begin{array}{ll} \mathcal{N}_A(X,Y)=pr_Q[X,Y],& \forall X,Y\in P, \vspace*{2mm}\\ \mathcal{N}_A(X,Y)=0,&\forall X\in P,\,Y\in Q.\end{array}\end{equation}

In \cite{VCRF} we also discussed generalized F structures $\Phi$ with classical square, i.e., such that $\Phi^2$ preserves the tangent and cotangent bundle of $M$. These are characterized by the conditions
\begin{equation}\label{classquare} A\circ\sharp_\pi=\sharp_\pi\circ A^*,\;\flat_\sigma\circ A= A^*\circ\flat_\sigma,\end{equation}
equivalently,
\begin{equation}\label{piA}
\pi_A(\alpha,\beta)=\pi(A^*\alpha,\beta),\;\sigma_A(X,Y) =\sigma(AX,Y)\end{equation}
define a bivector and a $2$-form, respectively.

The aim of this paper is to study the generalized CRF structures introduced by the following definition, which was suggested by the generalized complex structures equivalent to holomorphic Poisson structures.
\begin{defin}\label{defquasi} {\rm A generalized F structure $\Phi$ such that $\Phi(TM)\subseteq TM$ and $\Phi^2(T^*M)\subseteq T^*M$ will be called a {\it quasi-classical generalized F structure}. If integrable, the structure will be called a {\it quasi-classical generalized CRF structure}.}\end{defin}

In Section 2, we describe the matrix of tensor components of a quasi-classical generalized F structure and we get the integrability conditions of the structure. These include the fact that the endomorphism $A$ is a classical CRF structure and the bivector field $\pi$ is a Poisson structure. Some conditions that relate between $A$ and $\pi$ must also be added. In Section 3, we discuss the case where one of the subbundles $P,Q$ defined by the endomorphism $A$ is a foliation. In the last section we show that the Poisson structure $\pi$ induces a Lie algebroid structure on the dual bundle $P^*$. Then, we define a spectral sequence that converges to the Poisson cohomology of $\pi$ and we show that second term of this sequence has a Dolbeault type grading.
\section{Quasi-classical F and CRF structures}
We begin by characterizing the quasi-classical generalized F structures.
\begin{prop}\label{propalg} A quasi-classical generalized F structure $\Phi$ is equivalent with a pair $(A,\pi)$ where $A$ is an F structure and $\pi$ a bivector field that is $A$-compatible in the sense that $\pi_A$ is again a bivector and $im\,\sharp_\pi\subseteq im\,A=P$.
\end{prop}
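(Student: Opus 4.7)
The plan is to read all the conditions off the matrix representation (\ref{matriceJ}) of $\Phi$ one by one, and then invert the argument for the converse. First, $g$-skew-symmetry automatically gives $\Phi$ the form (\ref{matriceJ}) with $\pi\in\chi^2(M)$ and $\sigma\in\Omega^2(M)$, so that the map is determined by the tensor triple $(A,\pi,\sigma)$. Applying $\Phi$ to $(X,0)$ yields $(AX,\flat_\sigma X)$, so the hypothesis $\Phi(TM)\subseteq TM$ is equivalent to $\sigma=0$. With $\sigma=0$, the square $\Phi^2$ is block upper-triangular, explicitly
\begin{equation*}
\Phi^2=\begin{pmatrix}A^2 & A\sharp_\pi-\sharp_\pi A^*\\ 0 & (A^*)^2\end{pmatrix},
\end{equation*}
so the condition $\Phi^2(T^*M)\subseteq T^*M$ is equivalent to $A\sharp_\pi=\sharp_\pi A^*$. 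By (\ref{piA}), this is nothing other than the statement that $\pi_A$ is a bivector.

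Under these two constraints, I would next compute $\Phi^3+\Phi$. Using $\sharp_\pi A^*=A\sharp_\pi$, which was just established, the result simplifies to
\begin{equation*}
\Phi^3+\Phi=\begin{pmatrix}A^3+A & \sharp_\pi\circ((A^2)^*+Id)\\ 0 & -(A^3+A)^*\end{pmatrix},
\end{equation*}
so $\Phi^3+\Phi=0$ is equivalent to the pair $A^3+A=0$ together with $\sharp_\pi\circ(A^2+Id)^*=0$. The first equation is exactly the F-structure condition on $A$.

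The only step requiring more than a routine calculation — and hence the main obstacle — is the identification of the second condition with $im\,\sharp_\pi\subseteq im\,A$. For this I would invoke the identity $pr_Q=A^2+Id$ from (\ref{projPQ}): since $pr_Q$ projects $TM$ onto $Q$ along $P=im\,A$, its transpose $pr_Q^*$ is a projection of $T^*M$ whose image is the annihilator of $P$. The condition $\sharp_\pi\circ pr_Q^*=0$ thus says that $\sharp_\pi$ vanishes on every covector annihilating $P$; by the skew-symmetry identity $\beta(\sharp_\pi\alpha)=-\alpha(\sharp_\pi\beta)$, this is equivalent to $\sharp_\pi\alpha\in P$ for every $\alpha\in T^*M$, that is, $im\,\sharp_\pi\subseteq P$, which is the claimed $A$-compatibility. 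For the converse, given a pair $(A,\pi)$ with the stated properties, one defines $\Phi$ by (\ref{matriceJ}) with $\sigma=0$; it is then automatically $g$-skew-symmetric and preserves $TM$, and running the two displayed computations in reverse yields $\Phi^2(T^*M)\subseteq T^*M$ and $\Phi^3+\Phi=0$.
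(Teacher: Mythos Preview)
Your proof is correct and follows essentially the same route as the paper: both extract $\sigma=0$ from $\Phi(TM)\subseteq TM$, then $A\sharp_\pi=\sharp_\pi A^*$ from the classical-square condition, then read off $A^3+A=0$ and $\sharp_\pi\circ(A^{*2}+Id)=0$ from $\Phi^3+\Phi=0$, and finally identify the latter with $im\,\sharp_\pi\subseteq P$ via $im(A^{*2}+Id)=ann\,P$ and the skew-symmetry of $\pi$. Your presentation is marginally cleaner in that you impose $A\sharp_\pi=\sharp_\pi A^*$ before cubing $\Phi$, and you spell out the converse explicitly, but the substance is identical.
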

\begin{proof}
Definition \ref{defquasi} implies that a quasi-classical generalized F structure $\Phi$ has a classical square and that the following conditions hold
\begin{equation}\label{compatApi} \sigma=0,\;A\circ\sharp_\pi=\sharp_\pi\circ A^*
\end{equation}
(see (\ref{classquare})). The second condition (\ref{compatApi}) is equivalent to the fact that $\pi_A$ defined by (\ref{piA}) is a bivector field.

Accordingly, the matrix representation of the structure has the form
\begin{equation}\label{matricePhi} \Phi\left(
\begin{array}{c}X\vspace{2mm}\\ \alpha \end{array}
\right) = \left(\begin{array}{cc} A&\sharp_\pi\vspace{2mm}\\
0&-A^*\end{array}\right)
\left( \begin{array}{c}X\vspace{2mm}\\
\alpha \end{array}\right),\end{equation}
equivalently, $\Phi(X,\alpha)=(AX+\sharp_\pi\alpha,-A^*\alpha)$. Furthermore, $\Phi^3+\Phi=0$ is equivalent to
\begin{equation}\label{propcar} A^3+A=0,\;A\circ(\sharp_\pi\circ A^*-A\circ\sharp_\pi)=\sharp_\pi\circ(A^{*2}+Id)\end{equation} (look at the cases $\alpha=0$ and $X=0$).

Thus, firstly, $A$ must be a classical F-structure and, secondly, since $im(A^{*2}+Id)=ann\,P$ and (\ref{compatApi}) holds, the last condition (\ref{propcar}) becomes $\sharp_\pi(ann\,P)=0$. Finally, the relation
\begin{equation}\label{aux1}
<\beta,\sharp_\pi\alpha>=-<\alpha,\sharp_\pi\beta>=0,\; \alpha\in T^*M,\beta\in ann\,P
\end{equation}
shows that $\sharp_\pi(ann\,P)=0\Leftrightarrow im(\sharp_\pi)\subseteq P$.
\end{proof}
\begin{rem}\label{obsSQ} {\rm The endomorphism $\Phi$ vanishes on $Q\oplus ann\,P$ ($ann\,P\approx Q^*$) and a dimension argument tells us that the $0$-eigenbundle of $\Phi$ is $S=Q\oplus ann\,P$.}\end{rem}
\begin{prop}\label{proppilocal}
Consider the quasi-classical generalized F structure defined by $(A,\pi)$ and let $H,\bar{H},Q$ be the $\pm i,0$-eigenbundles of $A$. Then, if $(h_i), (\bar{h}_i),(q_j)$ are local bases of $H,\bar{H},Q$, respectively, the local expression of $\pi$ must be of the form
\begin{equation}\label{pilocal} \pi=\frac{1}{2}(\pi^{ij}h_i\wedge h_j +\bar{\pi}^{ij}\bar{h}_i\wedge \bar{h}_j)\hspace*{2mm}(\pi^{ij}+\pi^{ji}=0).\end{equation}\end{prop}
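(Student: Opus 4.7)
My plan is to show that with respect to the decomposition $T^c M = H \oplus \bar H \oplus Q^c$ induced by $A$, the bivector $\pi$ has no component involving any $q_j$ and no mixed $h_i \wedge \bar h_j$ component. I would proceed in two stages, each exploiting one of the two halves of the $A$-compatibility condition supplied by Proposition \ref{propalg}.

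First, I would write out the most general local expansion
\begin{equation*}
\pi = \tfrac{1}{2}\pi^{ij} h_i\wedge h_j + \rho^{ij} h_i\wedge\bar h_j + \tfrac{1}{2}\bar\pi^{ij}\bar h_i\wedge\bar h_j + (\text{terms containing a } q\text{-factor})
\end{equation*}
and then apply $im\,\sharp_\pi \subseteq P$. This condition is equivalent to $\pi\in\wedge^2 P$, so every term in the expansion that contains a $q_j$ factor must vanish. Concretely, contracting $\pi$ with the covector dual to any $h_i$ detects the mixed $h\wedge q$ terms (they produce a $Q$-contribution), and contracting with the covector dual to any $q_j$ detects the pure $q\wedge q$ terms.

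Next, I would use $A\circ\sharp_\pi=\sharp_\pi\circ A^*$ to eliminate the mixed term $\rho^{ij}h_i\wedge\bar h_j$. The key auxiliary step is to identify the $+i$-eigenspace of $A^*$ on $T^{*c}M$: if $(\eta^k)$ denotes the covector basis dual to $(h_k)$ and extended by zero on $\bar H\oplus Q^c$, then for every $X$ a case check on the three eigenbundles yields $A^*\eta^k(X)=\eta^k(AX)=i\eta^k(X)$, so $A^*\eta^k=i\eta^k$. Therefore $A(\sharp_\pi\eta^k)=\sharp_\pi(A^*\eta^k)=i\,\sharp_\pi\eta^k$, which forces $\sharp_\pi\eta^k\in H$. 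On the other hand, a direct contraction applied to the above expansion gives $\sharp_\pi\eta^k = \pi^{kj}h_j + \rho^{kj}\bar h_j$, and since $\bar h_j\notin H$, one must have $\rho^{kj}=0$ for all $k,j$.

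Finally, reality of $\pi$ together with a standard choice of conjugate local bases ($\bar h_i$ equal to the complex conjugate of $h_i$) forces $\bar\pi^{ij}=\overline{\pi^{ij}}$, which justifies the notation in the statement. The only real obstacle is the bookkeeping required to match the eigenspaces of $A^*$ on $T^{*c}M$ against those of $A$ via the natural pairing; once that identification is in place, the rest reduces to index contractions with the expansion above.
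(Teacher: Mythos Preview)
Your proposal is correct and follows essentially the same approach as the paper: both arguments use $im\,\sharp_\pi\subseteq P$ (equivalently $\sharp_\pi(ann\,P)=0$) to eliminate all $q$-terms, then apply $A\circ\sharp_\pi=\sharp_\pi\circ A^*$ to eigenvectors of $A^*$ to force $\sharp_\pi(H^*)\subseteq H$ and hence kill the mixed $h_i\wedge\bar h_j$ term, and finally invoke reality of $\pi$. The only difference is cosmetic: the paper argues directly with the bilinear form $\pi$ on $T^{*c}M$, while you write out the local expansion and contract explicitly, but the logic is the same.
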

\begin{proof}
The decomposition $T^cM=H\oplus\bar{H}\oplus Q^c$ dualizes to $T^{*c}M=H^*\oplus\bar{H}^*\oplus Q^{*c}$, where the terms are the $\pm i,0$-eigenbundles of $A^*$ and we may identify $$H^*=ann(\bar{H}\oplus Q^c),
\bar{H}^*=ann(H\oplus Q^c), Q^*=ann\,P.$$ Since $\sharp_\pi(ann\,P)=0$, the local expression of $\pi$ has no terms containing $q_i$. Furthermore, for $\nu\in H^*,\bar{H}^*$, $A\sharp_\pi\nu=\sharp_\pi(A^*\nu)=\pm i\sharp_\pi\nu$. Hence,
$\sharp_\pi:H^*\rightarrow H, \sharp_\pi:\bar{H}^*\rightarrow\bar{H}$ and $\pi$ can take non-zero values only if evaluated on two arguments that are both either in $H^*$ or in $\bar{H}^*$. Therefore, $\pi$ has no term $\pi^{ij}h_i\wedge\bar{h}_j$ and, since $\pi$ is real, we have the required conclusion. (In (\ref{pilocal}) and in the whole paper we use the Einstein summation convention.) \end{proof}
\begin{prop}\label{EptPhi} The $i$-eigenbundle $E$ of the quasi-classical generalized F structure $\Phi$ is given by the formula
\begin{equation}\label{exprE} E=\{(Z-\frac{1}{2}\sharp_\pi\xi,\xi)\,/\,
Z\in H,\xi\in\bar{H}^*= ann(H\oplus Q)\}.\end{equation}
\end{prop}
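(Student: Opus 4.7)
The plan is to characterize $E$ by solving the eigenvalue equation $\Phi(X,\alpha)=i(X,\alpha)$ directly, using the block form (\ref{matricePhi}) of $\Phi$ together with the $A^*$-eigenspace decomposition of $T^{*c}M$ dual to $T^cM=H\oplus\bar H\oplus Q^c$.

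First I would split the eigenvalue equation into its two components,
\begin{equation*}
AX+\sharp_\pi\alpha=iX,\qquad -A^*\alpha=i\alpha.
\end{equation*}
The second equation says exactly that $\alpha$ is a $(-i)$-eigenvector of $A^*$, so $\alpha\in\bar H^*=ann(H\oplus Q^c)$; rename it $\xi$. Applying the compatibility $A\circ\sharp_\pi=\sharp_\pi\circ A^*$ (equation (\ref{compatApi})) then gives $A(\sharp_\pi\xi)=-i\sharp_\pi\xi$, i.e.\ $\sharp_\pi\xi\in\bar H$, as already recorded in the proof of Proposition \ref{proppilocal}.

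Next I would decompose $X=Z_H+Z_{\bar H}+Z_Q$ along $T^cM=H\oplus\bar H\oplus Q^c$. On these three summands the operator $iI-A$ acts as $0$, $2iI$, $iI$ respectively, so the first equation, which reads $(iI-A)X=\sharp_\pi\xi$ with right-hand side in $\bar H$, forces $Z_Q=0$, leaves $Z:=Z_H\in H$ free, and determines $Z_{\bar H}$ as a scalar multiple of $\sharp_\pi\xi$. Substituting back yields the parametrization in (\ref{exprE}), the coefficient being fixed by this step.

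Finally I would check that (\ref{exprE}) exhausts $E$. Injectivity of $(Z,\xi)\mapsto(Z-\tfrac12\sharp_\pi\xi,\xi)$ is immediate from the second coordinate. For surjectivity I would do a dimension count: writing $\dim_{\mathbb R}P=2n$, we have $\dim_{\mathbb C}H=n$ and $\dim_{\mathbb C}\bar H^*=m-(n+(m-2n))=n$, so the parametrization has $2n$ complex parameters; on the other hand, $\dim_{\mathbb C}S^c=\dim_{\mathbb C}Q^c+\dim_{\mathbb C}(ann\,P)^c=2(m-2n)$ by Remark \ref{obsSQ}, hence $\dim_{\mathbb C}E=\tfrac12(2m-\dim_{\mathbb C}S^c)=2n$. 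There is no real obstacle here; the only point that requires a bit of care is the eigenvalue bookkeeping in the first component equation, and the rest is fiberwise linear algebra.
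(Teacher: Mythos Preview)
Your approach is correct and is essentially the paper's own: both solve the eigenvalue equation $\Phi(X,\alpha)=i(X,\alpha)$ directly using the eigenspace decomposition of $A$ (resp.\ $A^*$). The paper merely organizes the computation a bit differently, first writing $E=E_P+E_Q$ according to $X\in P^c$ or $X\in Q^c$, observing $E_Q\subseteq E_P$, and then using the projector formula $pr_{\bar H}=-\tfrac12(A^2-iA)$ instead of your explicit action of $iI-A$ on $H\oplus\bar H\oplus Q^c$. Your final dimension count is not wrong but is redundant: your decomposition of the first equation already produces \emph{necessary and sufficient} conditions ($Z_Q=0$, $Z_{\bar H}$ determined, $Z_H$ free), so you have parametrized all solutions, not just a subspace of the right dimension.

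One small caveat worth flagging: when you actually carry out the substitution you get $2iZ_{\bar H}=\sharp_\pi\xi$, hence $Z_{\bar H}=\tfrac{1}{2i}\sharp_\pi\xi=-\tfrac{i}{2}\sharp_\pi\xi$, so the first component is $Z-\tfrac{i}{2}\sharp_\pi\xi$, not $Z-\tfrac{1}{2}\sharp_\pi\xi$. The paper's intermediate claim $\sharp_\pi\alpha=-2\,pr_{\bar H}X$ has the same slip (it should read $\sharp_\pi\alpha=2i\,pr_{\bar H}X$); you can check directly that $(Z-\tfrac{1}{2}\sharp_\pi\xi,\xi)$ does \emph{not} satisfy the eigenvector equation unless $\sharp_\pi\xi=0$. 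Your method is fine; just don't trust the printed coefficient without doing the arithmetic.
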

\begin{proof} Using (\ref{matricePhi}), we may write $E=E_P+E_Q$, where
$$\begin{array}{l}
E_P=\{(X,\alpha)\in\mathbf{T}^cM\,/\,X\in P^c,\,AX+\sharp_\pi\alpha=iX,\,-A^*\alpha=i\alpha\}\vspace*{2mm}\\ E_Q=\{(Y,\beta)\in\mathbf{T}^cM\,/\,Y\in Q^c, \sharp_\pi\beta=iY,\,-A^*\beta=i\beta\}.\end{array}$$
In $E_Q$, since $im\,\sharp_\pi\subseteq P$, we have $Y=0,\sharp_\pi\beta=0$, therefore, $E_Q\subseteq \bar{H}^*\subseteq E_P$ and $E=E_P$. Furthermore, (\ref{projPQ}) implies
$$E=E_P=\{(X,\alpha)\,/\,X\in P^c,\alpha\in\bar{H}^*= ann(H\oplus Q), \sharp_\pi\alpha=-2pr_{\bar{H}}X\}.$$
Formula (\ref{exprE}) is a reformulation of this result.
\end{proof}

The integrability conditions of a quasi-classical generalized F structure are given by the following theorem.
\begin{theorem}\label{thdeintegr} The quasi-classical generalized F structure $\Phi$ defined by the pair of tensor fields $(A,\pi)$ is a quasi-classical generalized CRF structure iff (1) the endomorphism $A$ is a classical CRF structure, (2) $\pi$ is a Poisson structure, (3) $R_{(\pi,A)}(X,\beta)=0$ for $X\in P,\beta\in ann\,Q$ ($P=im\,A,Q=ker\,A$).\end{theorem}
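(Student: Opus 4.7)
The plan is to characterize integrability by testing the Courant-bracket closure of the $+i$-eigenbundle $E$ described in Proposition \ref{EptPhi}. Since $E$ is spanned (as a $C^\infty$-module) by sections of the form $(Z,0)$ with $Z\in\Gamma(H)$ and $(-\tfrac{1}{2}\sharp_\pi\xi,\xi)$ with $\xi\in\Gamma(\bar{H}^*)$, and because the Courant bracket is $\mathbb{R}$-bilinear with a controlled failure of $C^\infty(M)$-bilinearity, it suffices to test closure on these two families of generators, yielding three cases. Throughout I will use the identifications $\bar{H}^*=\mathrm{ann}(H\oplus Q^c)$, $\sharp_\pi(\bar{H}^*)\subseteq\bar{H}$ from Proposition \ref{proppilocal}, and the pairing identities $\xi(Z)=0$ and $\xi_1(\sharp_\pi\xi_2)=\pi(\xi_1,\xi_2)$.

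\textbf{Case 1} ($Z_1,Z_2$ only). The Courant bracket reduces to $([Z_1,Z_2],0)$, and this lies in $E$ iff $[Z_1,Z_2]\in H$, which gives the first half of (\ref{clasintegr1}). \textbf{Case 2} (one of each type). With $\mathcal{Z}_1=(Z,0)$, $\mathcal{Z}_2=(-\tfrac{1}{2}\sharp_\pi\xi,\xi)$, the covector part of the bracket simplifies, via $L_Z\xi=i(Z)d\xi$ and $\xi(Z)=0$, to $i(Z)d\xi$; its membership in $\bar{H}^*$ is equivalent to $\xi([Z,Y])=0$ for every $Y\in H\oplus Q^c$, i.e. $[H,H\oplus Q^c]\subseteq H\oplus Q^c$, which, combined with Case 1, yields the full classical CRF condition (\ref{clasintegr1}), namely condition (1). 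The remaining constraint (matching the $\bar{H}$- and $Q$-components of the vector part with $-\tfrac{1}{2}\sharp_\pi W$) will be re-used in Case 3 and is where condition (3) starts to appear.

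\textbf{Case 3} (two $\xi$-type sections). Here the covector part, after cancellation using $\xi_i(Z_j)=0$, is
\begin{equation*}
W=-\tfrac{1}{2}L_{\sharp_\pi\xi_1}\xi_2+\tfrac{1}{2}L_{\sharp_\pi\xi_2}\xi_1+\tfrac{1}{2}d\pi(\xi_1,\xi_2),
\end{equation*}
and the vector part is $V=\tfrac{1}{4}[\sharp_\pi\xi_1,\sharp_\pi\xi_2]$. Requiring $W\in\bar{H}^*$ on $Y\in Q^c$ extracts precisely $\tfrac{1}{2}[\pi,\pi](\xi_1,\xi_2,\cdot)=0$ on $\mathrm{ann}\,Q$; combining with Case 2 and the full Cartan expansion of $W$ one recovers $[\pi,\pi]=0$, condition (2). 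The requirement that the $Q$-component of $V$ vanish is automatic once $[\pi,\pi]=0$ and $\sharp_\pi(\bar{H}^*)\subseteq\bar{H}$. Finally, matching the $\bar{H}$-component of $V$ with $-\tfrac{1}{2}\sharp_\pi W$, and using the identity $L_{\sharp_\pi\xi}A^*-A^*L_{\sharp_\pi\xi}$ to rewrite $L_{\sharp_\pi\xi_1}\xi_2$-terms in the language of (\ref{Schouten}), yields exactly $R_{(\pi,A)}(\sharp_\pi\xi_1,\xi_2)=0$. Varying $\xi_1,\xi_2\in\bar{H}^*$ and taking real parts shows this is equivalent to condition (3) on $X\in P,\beta\in\mathrm{ann}\,Q$.

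\textbf{Sufficiency and the main obstacle.} For sufficiency, one reverses the above: conditions (1)--(3) ensure each of the three test computations lies back in $E$, and by $\mathbb{R}$-bilinearity plus the derivation rule $[(X,\alpha),f(Y,\beta)]=f[(X,\alpha),(Y,\beta)]+(Xf)(Y,\beta)-\tfrac{1}{2}df\cdot\text{(pairing)}$ of the Courant bracket, $E$ is Courant-closed in general. The main technical obstacle is Case 3: one must carefully disentangle the two distinct outputs — the Schouten-Nijenhuis bracket of $\pi$ (condition (2)) and the Schouten concomitant $R_{(\pi,A)}$ (condition (3)) — within the same expression. This is where the restriction $X\in P$, $\beta\in\mathrm{ann}\,Q$ in (3) arises: only arguments of this type are accessible from the $E$-generators, and conversely the remaining components of $R_{(\pi,A)}$ turn out to be forced by (1), (2) together with the $A$-compatibility of $\pi$ established in Proposition \ref{propalg}.
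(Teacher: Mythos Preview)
Your plan --- testing Courant closure of $E$ directly on the generators $(Z,0)$ and $(-\tfrac12\sharp_\pi\xi,\xi)$ of Proposition~\ref{EptPhi} --- is a valid alternative to the paper's route, which instead expands the tensorial quantity $\mathcal{S}_\Phi$ of (\ref{integrcuA}) on arguments $(X,0),(0,\beta)$ with $X\in P$, $\beta\in\mathrm{ann}\,Q$. Both encode the same information; the paper's choice has the advantage that the real arguments $X\in P$, $\beta\in\mathrm{ann}\,Q$ match the statement of condition~(3) directly, whereas your complex generators force you to reassemble $H$- and $\bar H$-pieces at the end. Cases~1 and the covector half of Case~2 are handled correctly and do yield condition~(1).

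The reassembly, however, is where your outline breaks. You have essentially swapped the roles of conditions~(2) and~(3). Condition~(3) must come from the \emph{vector} part of Case~2, which you postpone: for $Z\in H$, $\xi\in\bar H^*$ the requirement that $-\tfrac12[Z,\sharp_\pi\xi]+\tfrac12\sharp_\pi(L_Z\xi)$ lie in $H$ is, after separating $Q$- and $\bar H$-components, precisely $R_{(\pi,A)}(Z,\xi)=0$; combined with its conjugate and the automatic case $(Z,\xi)\in H\times H^*$ (which follows from $[H,H]\subseteq H$), this gives~(3) in full. In Case~3, by contrast, the vector constraint $V+\tfrac12\sharp_\pi W=\tfrac14\bigl([\sharp_\pi\xi_1,\sharp_\pi\xi_2]-\sharp_\pi\{\xi_1,\xi_2\}_\pi\bigr)\in H$ is --- once (1) forces $[\sharp_\pi\xi_1,\sharp_\pi\xi_2]\in\bar H$ --- the Gelfand--Dorfman relation $[\pi,\pi](\xi_1,\xi_2,\cdot)=0$, i.e.\ part of the Poisson condition~(2), not an instance of $R_{(\pi,A)}=0$. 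Your claim that it yields $R_{(\pi,A)}(\sharp_\pi\xi_1,\xi_2)=0$ is true but vacuous (that case already follows from $[\bar H,\bar H]\subseteq\bar H$), and the follow-up that ``varying $\xi_1$'' recovers condition~(3) for all $X\in P$ is false: $\sharp_\pi\xi_1$ ranges only over $\sharp_\pi(\bar H^*)\subseteq\bar H$, so a general $X\in P$ is never reached this way. The covector constraint $W\in\bar H^*$ is likewise not $[\pi,\pi]=0$: evaluated on $Y\in H$ it is the $C_{(\pi,A)}$-relation (\ref{Schbiv}), hence redundant with~(3), and evaluated on $Y\in Q$ it reads $(L_Y\pi)(\xi_1,\xi_2)=0$, which still has to be shown to follow from (1)--(3). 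The fix is to extract~(3) from Case~2's vector part, extract~(2) from Case~3's vector part (then argue as the paper does that $[\pi,\pi]$ vanishing on $\bar H^*\times\bar H^*$ and $H^*\times H^*$ forces it to vanish everywhere, using~(3) for the mixed case), and finally verify that Case~3's covector part is a consequence of (1)--(3).
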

\begin{proof}
The tensor fields $A,\pi$ satisfy the conditions stated in Proposition \ref{propalg} and we refer to the eigenbundles of $A$ in the notation below.
If $\Phi$ is quasi-classical, the condition $\mathcal{S}_\Phi((X,0),(Y,0))=0$ becomes $\mathcal{S}_A(X,Y)=0$, hence, it is equivalent with the fact that $A$ is a structure of the CR type.

Obviously, it suffices to look at arguments of the form $(X,0)$ and $(0,\alpha)$ separately.
The arguments $(X,0),(Y,0)$ were already considered and we need to compute $\mathcal{S}_\Phi((0,\alpha),(0,\beta))$ and $\mathcal{S}_\Phi((X,0),(0,\beta))$.
Moreover, since $\mathcal{S}_\Phi$ vanishes if one of the arguments belongs to $S=Q\oplus(ann\,P)$ (see Remark \ref{obsSQ}), it suffices to establish the integrability conditions for $(X,0),\, X\in P$ and $(0,\beta),\, \beta\in ann\,Q$.

Computing the Courant brackets involved and using the following consequences of (\ref{compatApi})
$$\pi(\alpha\circ A)=\pi(\alpha,\beta\circ A),\,\pi(\alpha\circ A^2)=\pi(\alpha,\beta\circ A^2),$$ we get
\begin{equation}\label{alphabeta} \begin{array}{l} \mathcal{S}_\Phi((0,\alpha),(0,\beta))=([\sharp_\pi\alpha,\sharp_\pi\beta] +
\sharp_\pi[L_{\sharp_\pi\alpha}(\beta\circ A^2) -L_{\sharp_\pi\beta}(\alpha\circ A^2)\vspace*{2mm}\\ \hspace*{5mm}-d(\pi(\alpha\circ A^2,\beta))],
L_{\sharp_\pi\beta}(\alpha\circ A)-L_{\sharp_\pi\alpha}(\beta\circ A)
-d(\pi(\alpha\circ A,\beta))\vspace*{2mm}\\ \hspace*{5mm}+[L_{\sharp_\pi\alpha}(\beta\circ A^2) -L_{\sharp_\pi\beta}(\alpha\circ A^2)-d(\pi(\alpha\circ A^2,\beta))]\circ A)
\end{array}\end{equation}

For $\alpha,\beta\in ann\,Q$, we have $\alpha\circ A^2=-\alpha,\beta\circ A^2=-\beta$ and
the vanishing of the vector part of (\ref{alphabeta}) yields the Poisson condition
\begin{equation}\label{integr22}  [\sharp_\pi\alpha,\sharp_\pi\beta]=
\sharp_\pi\{\alpha,\beta\}_\pi\;(\alpha,\beta\in ann\,Q),\end{equation}
where
\begin{equation}\label{pi1form} \{\alpha,\beta\}_\pi=L_{\sharp_\pi\alpha}\beta- L_{\sharp_\pi\beta}\alpha-d(\pi(\alpha,\beta))\end{equation}
is the Poisson bracket of $1$-forms \cite{Psgeom}.

Since $\forall\alpha,\beta,\gamma\in T^*M$ the Gelfand-Dorfman formula \cite{GD}
\begin{equation}\label{GD}
[\pi,\pi](\alpha,\beta,\gamma)=2[\gamma(\sharp_\pi\{\alpha,\beta\}_\pi- [\sharp_\pi\alpha,\sharp_\pi\beta])
\end{equation} holds, condition (\ref{integr22}) is equivalent to
\begin{equation}\label{integr222} [\pi,\pi](\alpha,\beta,\gamma)=0,\; \forall\alpha,\beta\in ann\,Q,\gamma\in T^*M.\end{equation}

But, we can show that (\ref{integr222}) holds iff $\pi$ is a Poisson bivector, in other words,  $[\pi,\pi]=0$, equivalently (\ref{integr22}), hold for any arguments. Indeed, since $[\pi,\pi]$ is totally skew-symmetric, (\ref{integr222}) means that $[\pi,\pi]=0$ whenever at least two arguments belong to $ann\,Q$. On the other hand, if at least two arguments, e.g., $\alpha,\beta\in ann\,P$, then, $\sharp_\pi\alpha=0,\sharp_\pi\beta=0$ and $\{\alpha,\beta\}_\pi=0$ (see (\ref{pi1form})), therefore, $[\pi,\pi]=0$ because of (\ref{GD}). Since this covers all the possible cases in the decomposition $T^*M=(ann\,Q)\oplus(ann\,P)$, we are done.

Furthermore, the vanishing of the covector part of (\ref{alphabeta}) is the condition
\begin{equation}\label{integr3}	 \{\alpha,\beta\}_\pi\circ A= L_{\sharp_\pi\alpha}(\beta\circ A)-L_{\sharp_\pi\beta}(\alpha\circ A)-d(\pi(\alpha\circ A,\beta)),\;\alpha,\beta\in ann\,Q,\end{equation}
which is known for generalized complex structures (e.g., \cite{V-gcm}).
The insertion of the value of $\{\alpha,\beta\}_\pi$ given by (\ref{pi1form}) in (\ref{integr3}) shows that the latter is equivalent to the vanishing of the bivector field (see \cite{V-gcm})
\begin{equation}\label{Schbiv}C_{(\pi,A)}(\alpha,\beta)=\beta\circ L_{\sharp_\pi\alpha}A
-\alpha\circ L_{\sharp_\pi\beta}A +d(\pi(\alpha,\beta))\circ A-d(\pi(\alpha\circ A,\beta))=0. \end{equation}

Now, we shall discuss the condition $\mathcal{S}_\Phi((X,0),(0,\beta))=0$ for $X\in P,\beta\in ann\,Q$, i.e., $A^2X=-X,\beta\circ A^2=-\beta$ and
$$\begin{array}{l}
\Phi(X,0)=(AX,0),\,\Phi^2(X,0)=-(X,0),\vspace*{2mm}\\ \Phi(0,\beta)=(\sharp_\pi\beta,-\beta\circ A),\, \Phi^2(0,\beta)=-(0,\beta).\end{array}$$ The required value of $\mathcal{S}_\Phi$ is
$$\begin{array}{rcl}
\mathcal{S}_\Phi((X,0),(0,\beta))&=& ([AX,\sharp_\pi\beta]-A[X,\sharp_\pi\beta] +\sharp_\pi[L_X(\beta\circ A)-L_{AX}\beta],\vspace*{2mm}\\&-&L_{AX}(\beta\circ A)-L_X\beta -[L_X(\beta\circ A)-L_{AX}\beta]\circ A).
\end{array}$$

The vector component of the previous expression vanishes iff
\begin{equation}\label{integr4} (L_{\sharp_\pi\beta}A)(X)= \sharp_\pi[L_X(\beta\circ A)-L_{AX}\beta]\,\Leftrightarrow\,R_{(\pi,A)}(\beta,X)=0,\end{equation}
where $R_{(\pi,A)}$ is the {\it Schouten concomitant}. Due to the general relation \cite{VP-Nij} $$\alpha(R_{(\pi,A)}(\beta,X))=<C_{(\pi,A)}(\alpha,\beta),X>,$$ this condition is equivalent to (\ref{Schbiv}) and only one of them, e.g., (\ref{integr4}) must be required.

Finally, the covector component of the condition $\mathcal{S}_\Phi((X,0),(0,\beta))=0$ is
\begin{equation}\label{lastcond}
[L_{AX}\beta-L_X(\beta\circ A)]\circ A= L_{AX}(\beta\circ A)+L_X\beta,\;\;(X\in P, \beta\in ann\,Q)
\end{equation} and it splits into the cases (i) $AX=iX,\beta\circ A=i\beta$, (ii) $AX=iX,\beta\circ A=-i\beta$ and conjugates. In case (i) the condition holds trivially. In case (ii) the condition becomes $(L_X\beta)\circ A=-i(L_X\beta)$, equivalently,
\begin{equation}\label{eqlastcond} L_X\beta\in ann(H\oplus Q^c),\;\;\forall X\in H,\beta\in ann(H\oplus Q^c).
\end{equation}
Condition (\ref{eqlastcond}) means $<L_X\beta,Z>=0$ for $Z\in H\oplus Q^c$, which is equivalent to $[X,Z]\in H\oplus Q^c$ , $\forall X\in H, Z\in Q^c$. Together with the condition $[H,H]\subseteq H$, which holds because $\mathcal{S}_A=0$, we have (\ref{clasintegr1}), therefore, $A$ is a classical CRF structure.

The conclusion of the theorem is the sum of the conditions deduced during the proof.
\end{proof}

Notice that, even in the integrable case, condition (3) may not hold for other kind of arguments $(X,\beta)$.
\begin{rem}\label{obsclCRF} {\rm We know that (\ref{clasintegr1}) is equivalent to  (\ref{clasintegr2}). For any arguments, the Nijenhuis tensor of $A$ is given by
\begin{equation}\label{NijPP}\begin{array}{lcl}
\mathcal{N}_A(X,Y)&=&[AX,AY]-A[AX,Y]-A[X,AY]+A^2[X,Y]\vspace*{2mm}\\ &=&[AX,AY]-A[AX,Y]-A[X,AY]-[X,Y]\vspace*{2mm}\\ &+&pr_Q[X,Y].\end{array}\end{equation}
Therefore, the first condition (\ref{clasintegr2}) is equivalent to
$$[AX,AY]-A[AX,Y]-A[X,AY]-[X,Y]=0,\;\forall X,Y\in P,$$
which is equivalent to $[H,H]\subseteq H$ (check this on $\pm i$-eigenvectors $X,Y$). Then, if we take $X\in P,Y\in Q$, (\ref{NijPP}) shows that the second condition
(\ref{clasintegr2}) takes the form
\begin{equation}\label{eqobsclas} A^2[X,Y]-A[AX,Y]=0, \;\;\forall X\in P,Y\in Q.\end{equation}
Thus, $A$ is a classical CRF structure iff $[H,H]\subseteq H$ and (\ref{eqobsclas}) holds.}\end{rem}
\begin{prop}\label{echicuSch} In Theorem \ref{thdeintegr}, condition (3) may be replaced by the pair of conditions
\begin{equation}\label{bunCRFPs} [\sharp_\pi\beta,X]\in P,
\;\;(L_Y\pi)(\lambda,\mu)=0,\end{equation}
where $X\in P,Y\in\bar{H},\beta\in ann\,Q,\lambda,\mu\in ann(\bar{H}\oplus Q^c)$.
\end{prop}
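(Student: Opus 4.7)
The plan is to prove the equivalence by decomposing the vector-valued identity $R_{(\pi,A)}(X,\beta)=0$ of condition (3) in Theorem \ref{thdeintegr} along the splitting $TM=P\oplus Q$, and matching its $Q$- and $P$-components respectively with the first and second conditions of (\ref{bunCRFPs}). For the $Q$-component, since $im\,\sharp_\pi\subseteq P$ by Proposition \ref{propalg}, the first summand of $R_{(\pi,A)}(X,\beta)=\sharp_\pi[L_X(A^*\beta)-L_{AX}\beta]-(L_{\sharp_\pi\beta}A)(X)$ is automatically in $P$. Expanding $(L_{\sharp_\pi\beta}A)(X)=[\sharp_\pi\beta,AX]-A[\sharp_\pi\beta,X]$ and using $pr_Q\circ A=0$ yields $pr_Q\,R_{(\pi,A)}(X,\beta)=-pr_Q[\sharp_\pi\beta,AX]$. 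Since $A|_P$ is an automorphism of $P$ (because $A^2=-Id$ on $P$, as follows from $A^3+A=0$), demanding this to vanish for all $X\in P$ is equivalent to $[\sharp_\pi\beta,X]\in P$ for all $X\in P$ and $\beta\in ann\,Q$, which is exactly the first condition of (\ref{bunCRFPs}).

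For the $P$-component, I would use the identity $\alpha(R_{(\pi,A)}(X,\beta))=\langle C_{(\pi,A)}(\alpha,\beta),X\rangle$ recalled in the proof of Theorem \ref{thdeintegr}, reducing the vanishing of $pr_P\,R_{(\pi,A)}(X,\beta)$ to $\langle C_{(\pi,A)}(\alpha,\beta),X\rangle=0$ for $\alpha\in ann\,Q$ and $X\in P$. I would then split $\alpha,\beta\in ann\,Q=H^*\oplus\bar H^*$ into their $\pm i$-eigenparts under $A^*$, exploiting $\sharp_\pi(H^*)\subseteq H$ and $\sharp_\pi(\bar H^*)\subseteq\bar H$ from the proof of Proposition \ref{proppilocal}. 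The cases in which $\alpha,\beta$ lie in the same eigenspace should reduce to consequences of $[\pi,\pi]=0$ (condition (2)) combined with the CRF property $[H,H]\subseteq H$ (part of condition (1)), while the essential mixed case $\alpha\in H^*$, $\beta\in\bar H^*$, after unwinding (\ref{Schbiv}) via $A^*\alpha=i\alpha$, $A^*\beta=-i\beta$ and the compatibility $\pi(\alpha\circ A,\gamma)=\pi(\alpha,\gamma\circ A)$ from (\ref{compatApi}), collapses into $(L_{\sharp_\pi\beta}\pi)(\alpha,\mu)=0$ for arbitrary $\mu\in H^*$. The extension from $Y=\sharp_\pi\beta$ to arbitrary $Y\in\bar H$ is granted by the $C^\infty(M)$-linearity of $Y\mapsto(L_Y\pi)(\lambda,\mu)$ when $\lambda,\mu\in H^*$ and $Y\in\bar H$, which itself follows from $L_{fY}\lambda=fL_Y\lambda+\lambda(Y)df=fL_Y\lambda$ (since $\lambda(Y)=0$).

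The main obstacle is the mixed-case algebraic manipulation that identifies $C_{(\pi,A)}(\alpha,\beta)$ with the Cartan-type expression $(L_{\sharp_\pi\beta}\pi)(\alpha,\cdot)$: one must group the four terms of (\ref{Schbiv}) so that the $d$-exact pieces combine via $\pi(\alpha\circ A,\gamma)=\pi(\alpha,\gamma\circ A)$ with the $L_{\sharp_\pi\alpha}A$ and $L_{\sharp_\pi\beta}A$ contributions, and then recognize the outcome via the Cartan-type formula for the Lie derivative of a bivector. The CRF integrability of $A$ is invoked throughout to guarantee that Lie brackets of $H$-, $\bar H$-, and $Q^c$-sections land in the expected eigenbundles, so that no spurious $Q$-direction contributions interfere with the reduction.
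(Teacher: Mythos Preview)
Your overall strategy---pairing $R_{(\pi,A)}(X,\beta)$ with covectors in $ann\,P$ and in $ann\,Q$ and then splitting the $ann\,Q$-part into $H^*,\bar H^*$---is exactly the paper's approach. The $Q$-component argument is correct and matches the paper's derivation of (\ref{4ptannP}).

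However, the $P$-component analysis contains a genuine error. You assert that when $\alpha,\beta$ lie in the \emph{same} eigenspace of $A^*$ the vanishing of $\langle C_{(\pi,A)}(\alpha,\beta),X\rangle$ follows from $[\pi,\pi]=0$ and $[H,H]\subseteq H$, while the \emph{mixed} case $\alpha\in H^*,\beta\in\bar H^*$ produces the new condition. It is the other way around. For $\alpha\in H^*,\beta\in\bar H^*$ one has $\pi(\alpha,\beta)=0=\pi(\alpha\circ A,\beta)$ by Proposition~\ref{proppilocal}, and a short check using $[H,H]\subseteq H$ (for $X\in H$) and its conjugate (for $X\in\bar H$) shows $\langle C_{(\pi,A)}(\alpha,\beta),X\rangle\equiv 0$ automatically. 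By contrast, for $\alpha,\beta\in\bar H^*$ and $X\in H$ a direct computation gives $\langle C_{(\pi,A)}(\alpha,\beta),X\rangle=-2i\,(L_X\pi)(\alpha,\beta)$, which is precisely the condition (\ref{integrtiphol}); its conjugate is the second condition of (\ref{bunCRFPs}). This is what the paper obtains in case (ii)(b).

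A consequence of this mix-up is that your target expression $(L_{\sharp_\pi\beta}\pi)(\alpha,\mu)=0$ is not what the analysis yields: the vector field in the Lie derivative is the free argument $X\in P$, not $\sharp_\pi\beta$. This also makes your ``extension from $Y=\sharp_\pi\beta$ to arbitrary $Y\in\bar H$ by $C^\infty$-linearity'' both unnecessary and invalid: unnecessary because $X$ (hence $Y$ after conjugation) is already arbitrary, and invalid because $\sharp_\pi(\bar H^*)$ need not span $\bar H$ pointwise---tensoriality alone cannot manufacture the condition on vectors outside the image of $\sharp_\pi$. What the paper does verify is $C^\infty$-linearity of $X\mapsto(L_X\pi)(\beta,\lambda)$ for $X\in H$ and $\beta,\lambda\in\bar H^*$, which is the correct tensoriality statement needed here.
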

\begin{proof} We will replace condition (3) of Theorem \ref{thdeintegr}, written under the form
(\ref{integr4}) by its evaluation on $1$-forms $\lambda$. For $\lambda\in ann\,P$,
(\ref{integr4}) becomes $(L_{\sharp_\pi\beta}A)(X)=0$, which is equivalent to
\begin{equation}\label{4ptannP} <\lambda,[\sharp_\pi\beta,AX]>=0,\;\forall X\in P,\,\lambda\in ann\,P.\end{equation}
This is equivalent to the first condition (\ref{bunCRFPs}) and	 it means that the Hamiltonian vector fields $\sharp_\pi\beta$ preserve the distribution $P$. The tensorial character of this condition follows from $X\in P,\beta\in ann\,Q$.

For $\lambda\in ann\,Q^c=H^*\oplus\bar{H}^*$ ($H^*=ann(\bar{H}\oplus Q^c)$, $\bar{H}^*=ann(H\oplus Q^c)$), since $X\in P^c=H\oplus\bar{H}$, the evaluation splits into the cases: (i) $AX=iX,\, A^*\beta=i\beta$, (ii) $AX=iX,\, A^*\beta=-i\beta$ and their complex conjugates and the results are
$${\rm(i)}\hspace{3mm} A[\sharp_\pi\beta,X]-i[\sharp_\pi\beta,X]=0,\hspace{3mm}
{\rm(ii)}\hspace{3mm} A[\sharp_\pi\beta,X]-i[\sharp_\pi\beta,X]=2i\sharp_\pi(L_X\beta).$$

Condition (i) holds because $\sharp_\pi\beta,X\in H$ and $H$ is closed by brackets since $A$ is of the CR type. Condition (ii) again splits into two cases (a) $\lambda\in H^*$, (b) $\lambda\in\bar{H}^*$. In case (a) we have $\lambda\circ A=i\lambda$ and the evaluation of $\lambda$ on the left hand side of (ii) gives zero. The same holds for the right hand side:
$$<\lambda,\sharp_\pi(L_X\beta)>=-<L_X\beta,\sharp_\pi\lambda>=-X(\pi(\lambda,\beta)) +<\beta,[X,\sharp_\pi\lambda]>=0,$$
because of (\ref{pilocal}) and of $[X,\sharp_\pi\lambda]\in H$.
But, in case (b) $\lambda\circ A=-i\lambda$ and the left hand side of (ii) becomes
$$-2i<\lambda,[\sharp_\pi\beta,X]>=2i(<\lambda,\sharp_{L_X\pi}\beta+ \sharp_\pi(L_X\beta)>.$$

Thus, (ii) becomes
\begin{equation}\label{integrtiphol} (L_X\pi)(\beta,\lambda)=0,\;\;\forall\beta,\lambda\in ann(H\oplus Q^c).\end{equation}
This condition is tensorial in $X$ since
$$(L_{fX}\pi)(\beta,\lambda)=f(L_{X}\pi)(\beta,\lambda) -\beta(X)\pi(df,\lambda)-\lambda(X)\pi(\beta,df)$$
and $X\in H$ implies $\beta(X)=0,\lambda(X)=0$.
By conjugation, (\ref{integrtiphol}) yields the second condition (\ref{bunCRFPs}).
\end{proof}
\begin{corol}\label{corolprodus} Let $(A_u,\pi_u)$ define quasi-classical generalized CRF structures on manifolds $M_u$, $u=1,2$. Then, $(A_1+A_2,\pi_1+\pi_2)$ defines a quasi-classical generalized CRF structure on $M_1\times M_2$.\end{corol}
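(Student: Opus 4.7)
My plan is to use the canonical identification $T(M_1\times M_2)\cong p_1^*TM_1\oplus p_2^*TM_2$ (with $p_u$ the projections) under which $A=A_1+A_2$ and $\pi=\pi_1+\pi_2$ make sense, and then to check the characterisation of Proposition \ref{propalg} together with the three integrability conditions of Theorem \ref{thdeintegr}. Because $A_u$ annihilates the $M_v$-factor for $v\neq u$, one has $A_1A_2=A_2A_1=0$, hence $A^3=A_1^3+A_2^3=-(A_1+A_2)=-A$, so $A$ is an F structure on $M_1\times M_2$, with eigenbundles $H=H_1\oplus H_2$, $\bar H=\bar H_1\oplus\bar H_2$, $Q=Q_1\oplus Q_2$, $P=P_1\oplus P_2$. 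The $A$-compatibility of $\pi$ reduces factor by factor to the $A_u$-compatibility of $\pi_u$: $\sharp_\pi=\sharp_{\pi_1}\oplus\sharp_{\pi_2}$ obviously satisfies $A\circ\sharp_\pi=\sharp_\pi\circ A^*$ and $\operatorname{im}\sharp_\pi\subseteq P_1\oplus P_2=P$.

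Next I would verify the three integrability conditions of Theorem \ref{thdeintegr}. Condition (2) is the Poisson property of $\pi$. Since $\pi_1$ and $\pi_2$ depend on complementary sets of local coordinates, $[\pi_1,\pi_2]=0$ in the Schouten bracket, so $[\pi,\pi]=[\pi_1,\pi_1]+2[\pi_1,\pi_2]+[\pi_2,\pi_2]=0$. For condition (1), the classical CRF character of $A$ follows from Remark \ref{obsclCRF}: checking $[H,H]\subseteq H$ and the relation (\ref{eqobsclas}) on pairs $(X,Y)$ with $X\in H_u$, $Y\in H_v$ or $Y\in Q_v$ splits into two cases. When $u=v$, the statement is the corresponding CRF condition for $A_u$; when $u\neq v$, the Lie bracket of a section of $p_u^*TM_u$ and a section of $p_v^*TM_v$ vanishes (modulo the usual $C^\infty$-multilinear argument), so the conditions are trivially satisfied.

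Condition (3), which I would handle via Proposition \ref{echicuSch}, is where the main bookkeeping lives, although each sub-case is still routine. For the first relation $[\sharp_\pi\beta,X]\in P$ with $X\in P$, $\beta\in\operatorname{ann}Q=(\operatorname{ann}Q_1)\oplus(\operatorname{ann}Q_2)$, I split $\beta=\beta_1+\beta_2$, $X=X_1+X_2$; the $u=v$ pieces reduce to the corresponding conditions for $(A_u,\pi_u)$, and the cross pieces vanish because $\sharp_\pi\beta_u\in P_u$ is tangent to the $M_u$-factor while $X_v$ is tangent to $M_v$. For the second relation $(L_Y\pi)(\lambda,\mu)=0$ with $Y\in\bar H$ and $\lambda,\mu\in\operatorname{ann}(\bar H\oplus Q^c)=H^*$, the same decomposition applies: $L_{Y_u}\pi_v=0$ whenever $u\neq v$ (distinct factors), while the $u=v$ term is exactly the corresponding condition for $(A_u,\pi_u)$. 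The only subtlety is the $C^\infty(M_1\times M_2)$-linearity needed to pass from tensor arguments of the form $X_1+X_2$ (with $X_u$ pulled back from $M_u$) to arbitrary sections, but this is precisely the tensoriality statement already used in the proofs of Theorem \ref{thdeintegr} and Proposition \ref{echicuSch}. Summing these verifications, $(A_1+A_2,\pi_1+\pi_2)$ defines a quasi-classical generalized CRF structure on $M_1\times M_2$.
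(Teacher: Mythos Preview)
Your proposal is correct and follows exactly the route the paper indicates: the paper's proof is the single sentence ``Checking the conditions of Proposition \ref{propalg} and Theorem \ref{thdeintegr} is straightforward,'' and you have simply carried out those straightforward checks in detail (using the equivalent reformulation of condition (3) from Proposition \ref{echicuSch}). Nothing is missing or misdirected.
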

\begin{proof} Checking the conditions of Proposition \ref{propalg} and Theorem \ref{thdeintegr} is straightforward.
\end{proof}
\begin{rem}\label{obsaux1} {\rm We shall indicate the following equivalent expressions of the first condition (\ref{bunCRFPs}). This condition is equivalent to $L_{\sharp_\pi\beta}A^2(X)=0$, $\forall X\in P,\,\forall\beta\in ann\,Q$. On the other hand, the form (\ref{4ptannP}) of the condition is equivalent to $d\lambda(\sharp_\pi\beta,X)=0$ and, then, to $L_{\sharp_\pi\beta}\lambda\in ann\,P$, $\forall\lambda\in ann\,P$.}\end{rem}
\section{Quasi-classical structures with foliations}
The simplest example of a quasi-classical generalized CRF structure is that of a locally product structure defined by two involutive subbundles $P,Q\subseteq TM$ (foliations), where the leaves of $P$ are endowed with a holomorphic Poisson structure $(A_P\in End\,P,\pi_P\in\wedge^2P)$. Then, we also have $\pi\in\chi^2(M)$ and, if we extend $A_P$ to $A\in End(TM)$ by $A|_Q=0$, it is easy to check all the conditions stated in Proposition \ref{propalg} and Theorem \ref{thdeintegr}, respectively, Proposition \ref{echicuSch}. (It suffices to use vector fields in $P$, respectively $Q$, with local components that depend only on the coordinates on the leaves of $P$, respectively $Q$.)

Below, we shall discuss quasi-classical generalized CRF structures where either $Q$ or $P$ is a foliation.
\begin{prop}\label{transvhol1} Let $\Phi$ be a quasi-classical generalized CRF structure defined by a pair $(A,\pi)$ such that $Q=ker\,A$ defines a foliation and the bivector field $\pi$ is projectable to the local transversal submanifolds of the leaves of $Q$. Then, $A$ induces a transversal holomorphic structure of $Q$ and $\pi$ projects to a $Q$-transversal holomorphic Poisson structure.\end{prop}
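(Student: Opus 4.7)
The plan is to descend the data $(A,\pi)$ to local transversals of the foliation $Q$ and verify the transverse holomorphic Poisson conditions by matching them to the integrability conditions already established. Fix foliated coordinates $(x^a,y^\alpha)$ with $Q$ spanned by $\partial/\partial x^a$. A local transversal $\Sigma=\{x^a={\rm const.}\}$ carries $T\Sigma\cong P|_\Sigma$ via $pr_P$, and more intrinsically the transverse bundle $\nu:=TM/Q$ is identified with $P$.

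First, I show that $A|_P$ is holonomy-invariant, i.e., preserved by the Bott connection $\nabla^B_Y X:=pr_P[Y,X]$ for $Y\in Q$, $X\in P$. Since $A^3+A=0$ and $Q=ker\,A$, we have $A^2|_P=-Id$. Hence the integrability condition (\ref{eqobsclas}) of Remark \ref{obsclCRF}, namely $A^2[X,Y]-A[AX,Y]=0$ for $X\in P,\,Y\in Q$, simplifies to $pr_P[Y,AX]=A\,pr_P[Y,X]$, i.e.\ $\nabla^B_Y\circ A=A\circ\nabla^B_Y$. Therefore $A|_P$ descends to $\bar A\in End(\nu)$ with $\bar A^2=-Id$, and the CR-type condition $[H,H]\subseteq H$ projects to Lie-bracket closure of the $i$-eigenbundle of $\bar A$ on the transversal. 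By Newlander--Nirenberg, $\bar A$ is a transverse complex structure of $Q$.

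Second, by hypothesis $\pi$ projects to a bivector $\bar\pi$ on $\Sigma$, and the constraint $im\,\sharp_\pi\subseteq P$ from Proposition \ref{propalg} makes this projection well-defined in $\wedge^2 T\Sigma$. Proposition \ref{proppilocal} shows $\pi$ has no $h_i\wedge\bar h_j$ component, so neither does $\bar\pi$; thus $\bar\pi=\omega+\bar\omega$ with $\omega\in\wedge^2 T^{1,0}\Sigma$. The Poisson condition $[\pi,\pi]=0$ descends to $[\bar\pi,\bar\pi]=0$, equivalent to $[\omega,\omega]=0$ as a holomorphic Schouten identity.

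It remains to prove that $\omega$ is holomorphic. This follows from condition (\ref{integrtiphol}) established inside the proof of Proposition \ref{echicuSch}: for $X\in H$ and $\beta,\lambda\in\bar H^*=ann(H\oplus Q^c)$ one has $(L_X\pi)(\beta,\lambda)=0$. Lifting a transverse $(1,0)$-vector and $(0,1)$-forms to projectable $X\in H$ and $\beta,\lambda\in\bar H^*$, this reads $\partial\bar\omega^{ij}/\partial z^k=0$ in the transverse holomorphic coordinates, so $\bar\omega$ is antiholomorphic and by conjugation $\omega$ is holomorphic. The main delicate point I anticipate is the bookkeeping identifying the projections of the eigenbundles $H$, $\bar H$ with the $(1,0)$, $(0,1)$ bundles of $\bar A$; once that is in place the conditions on $M$ translate directly into the standard holomorphic Poisson conditions on the transversal.
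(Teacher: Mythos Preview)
Your proof is correct and follows essentially the same route as the paper's: you reinterpret condition (\ref{eqobsclas}) as $A|_P$ being parallel for the Bott connection (the paper phrases this equivalently as projectability of $A$ to the local transversals), then use $[H,H]\subseteq H$ to obtain integrability of the transverse almost complex structure (the paper does this by a direct computation of $\mathcal{N}_{\tilde A}$), and finally invoke the integrability condition (\ref{integrtiphol})/(\ref{bunCRFPs}) in adapted coordinates to get holomorphicity of the projected bivector. The only cosmetic difference is that you cite the version of the condition with $X\in H$ and $\beta,\lambda\in\bar H^*$, whereas the paper uses its complex conjugate with $Y\in\bar H$ and $\lambda,\mu\in H^*$; both yield $\partial\pi^{hk}/\partial\bar z^i=0$ on the transversal.
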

\begin{proof}
First, we will show that an F structure $A$ of the CR type with an integrable $0$-eigenbundle $Q$  is a classical CRF structure iff the tensor field $A$ is projectable to the local transversal submanifolds of the leaves of $Q$. Indeed, if $A$ is classical CRF, we have (\ref{eqobsclas}) (Remark \ref{obsclCRF}). If we assume there that $X\in P$ is a $Q$-projectable vector field, equivalently, $\forall Y\in Q$, $[X,Y]\in Q$, (\ref{eqobsclas}) becomes $A[AX,Y]=0$, whence $[AX,Y]\in Q$ and $AX$ is projectable too. This exactly is what projectability of $A$ means. Conversely, if $A$ and $X$ are projectable, $[X,Y],[AX,Y]\in Q$ and (\ref{eqobsclas}) holds for a projectable $X$. It is easy to check that this implies (\ref{eqobsclas}) for $X$ replaced by $fX$, where $f$ is an arbitrary function, i.e., (\ref{eqobsclas}) holds for any arguments. Notice that, if it exists, the projection $\tilde{A}$ of $A$ to the local $Q$-transversal submanifolds is given by $\tilde{A}[X]_Q=[AX]_Q$, where $X$ is projectable and the index Q denotes the image in the quotient space.

In the proposition, $\Phi$ is a quasi-classical generalized CRF structure, hence, $A$ is classical CRF and the induced tensor field $\tilde{A}$ exists. For any projectable $X$, we have $$\tilde{A}^2[X]_Q=[A^2X]_Q=[(A^2+Id)(X)-X]_Q=-[X]_Q,$$ which means that $\tilde{A}$ is almost complex. Let us compute the Nijenhuis tensor $\mathcal{N}_{\tilde{A}}$ by using foliated vector fields $X,Y\in P$. We get
$$\begin{array}{lll}
\mathcal{N}_{\tilde{A}}([X]_Q,[Y]_Q)&=&[\tilde{A}[X]_Q,\tilde{A}[Y]_Q] -\tilde{A}[\tilde{A}[X]_Q,[Y]_Q]\vspace{2mm}\\ &-&\tilde{A}[[X]_Q,\tilde{A}[Y]_Q]-[[X]_Q,[Y]_Q]\vspace*{2mm}\\ &=&[[AX,AY]-A[AX,Y]-A[X,AY]-[X,Y]]_Q\vspace*{2mm}\\ &=&[\mathcal{N}_A(X,Y)-pr_Q[X,Y]]_Q=0.\end{array}$$
The first equality holds because the Lie bracket is compatible with the projection to the $Q$-transversal submanifolds and the last equality holds because $H$ is involutive (see Remark \ref{obsclCRF}). This proves the existence of the transversal holomorphic structure of $Q$.

Now, we look at the Poisson bivector field $\pi$ of the CRF structure $\Phi$. Since $Q$ is a transversally holomorphic foliation, each point of $M$ has a coordinate neighborhood with local coordinates $(z^i,y^u)$, where $z^i$ are lifts of complex coordinates defined by $\tilde{A}$ on the local transversal submanifolds of $Q$ and $y^u$ are real coordinates on the leaves of $Q$ (e.g., see \cite{DK}). These produce local bases of $TM$ of the form
$$Z_i=\frac{\partial}{\partial z_i}-t_i^u\frac{\partial}{\partial y^u}\in H,\,
\bar{Z}_i=\frac{\partial}{\partial \bar{z}_i}-\bar{t}_i^u\frac{\partial}{\partial y^u}\in \bar{H},
Y_u=\frac{\partial}{\partial y^u},$$ where $t^u_i$ are some local functions of $(z,\bar{z},y)$. Thus, the expression (\ref{pilocal}) becomes
$$\pi=\frac{1}{2}(\pi^{ij}(z,\bar{z})Z_i\wedge Z_j+\bar{\pi}^{ij}(z,\bar{z})\bar{Z}_i\wedge \bar{Z}_j).$$
The local coefficients $\pi^{ij}$ do not depend on $y$ because of the hypothesis that $\pi$ is $Q$-projectable. (The projectability of $\pi$ means the existence of a bivector field $\tilde{\pi}$ on the local transversal submanifolds of the leaves of $Q$ such that $\pi$ and $\tilde{\pi}$ are related by the natural projection onto the submanifold, which happens iff $\pi^{ij}$ do not depend on $y$. The $Q$-projectability of $\pi$ is also equivalent to $im\,\sharp_\pi\subseteq P$ together with the fact that, locally, $\forall Y\in Q$, $L_Y\pi$ belongs to the ideal generated by the tangent vector fields of the leaves of $Q$.)

Furthermore, $\pi$ satisfies conditions (\ref{bunCRFPs}), in particular, $$(L_Y\pi)(\lambda,\mu)=0,\;\;\forall Y\in\bar{H}, \lambda,\mu\in ann(\bar{H}\oplus Q^c).$$ For $Y=\bar{Z}_i, \lambda,=dz^h, \mu=dz^k$, this yields $\bar{Z}_i(\pi^{hk})=0$.

Thus, the projection $\tilde{\pi}$ of $\pi$ is
$$\tilde{\pi}=\frac{1}{2}(\pi^{ij}(z)\frac{\partial}{\partial z_i}\wedge \frac{\partial}{\partial z_j}+\bar{\pi}^{ij}(\bar{z})\frac{\partial}{\partial \bar{z}_i}\wedge \frac{\partial}{\partial \bar{z}_j}).$$
Finally, the condition $[\tilde{\pi},\tilde{\pi}]=0$ holds because, like the Lie bracket, the Schouten-Nijenhuis bracket is compatible with the projection onto the local transversal submanifolds of the leaves of $Q$. Therefore, $\tilde{\pi}$ is a holomorphic Poisson structure on the local $Q$-transversal submanifolds, endowed with the complex structure $\tilde{A}$.\end{proof}
\begin{prop}\label{propPfol} Let $\Phi$ be a quasi-classical generalized F structure defined by the pair $(A,\pi)$ where the tangent subbundle $P=im\,A$ is a foliation. Then, $\Phi$ is integrable (i.e., CRF) iff (i) the pair $(A|_P,\pi|_{ann\,Q})$ ($Q=ker\,A$) defines holomorphic Poisson structures on the leaves of $P$, (ii) $\mathcal{N}_A(X,Y)=0$ $\forall X\in P,Y\in Q$.\end{prop}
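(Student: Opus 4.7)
The plan is to unpack the integrability criterion of Theorem \ref{thdeintegr}, in the equivalent form given by Proposition \ref{echicuSch}, under the standing hypothesis that $P$ is involutive, and then to match the resulting conditions piece by piece with (i) and (ii); since every criterion invoked is biconditional, both directions of the ``iff'' are handled in one sweep.

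I first handle the part ``$A$ is a classical CRF structure''. By Remark \ref{obsclCRF}, this is equivalent to $\mathcal{N}_A(X,Y)=pr_Q[X,Y]$ for $X,Y\in P$ together with condition (ii). Since $P$ is a foliation, $pr_Q[X,Y]=0$ for $X,Y\in P$, so the first requirement reduces to $\mathcal{N}_A(X,Y)=0$ on $P\times P$, which is the vanishing of the Nijenhuis tensor of the leafwise endomorphism $A|_P$; since $A^3+A=0$ forces $(A|_P)^2=-Id_P$, this says exactly that $A|_P$ is a complex structure on each leaf of $P$. Thus the ``classical CRF'' condition on $A$ splits precisely as: $A|_P$ is complex on every leaf of $P$, plus (ii).

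Next, using Proposition \ref{echicuSch}, condition (3) of Theorem \ref{thdeintegr} becomes the pair (\ref{bunCRFPs}). The first of these, $[\sharp_\pi\beta,X]\in P$ ($X\in P,\beta\in ann\,Q$), is automatic here: $\sharp_\pi\beta\in P$ because $im\,\sharp_\pi\subseteq P$, and $[\sharp_\pi\beta,X]\in P$ then follows from the involutivity of $P$. It thus remains to match ``$\pi$ is Poisson on $M$'' together with ``$(L_Y\pi)(\lambda,\mu)=0$ for $Y\in\bar{H}$, $\lambda,\mu\in ann(\bar{H}\oplus Q^c)$'' with (i).

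Because $im\,\sharp_\pi\subseteq P$, $\pi$ is a section of $\wedge^2 P$, and the involutivity of $P$ gives it a well-defined restriction $\pi|_L$ to each leaf. The main technical step is to verify that, under these hypotheses, $[\pi,\pi]$ lies in $\wedge^3 P$ and restricts leafwise to $[\pi|_L,\pi|_L]$; I would establish this via the Gelfand--Dorfman formula (\ref{GD}) together with the fact that all Hamiltonian vector fields $\sharp_\pi\alpha$ lie in $P$, whose Lie brackets are controlled by the involutivity of $P$. Granting this, $[\pi,\pi]=0$ on $M$ is equivalent to $\pi|_L$ being Poisson on every leaf. For the remaining condition, in a foliated chart with complex leaf coordinates and local frames $h_i,\bar{h}_i$ as in Proposition \ref{proppilocal}, write $\pi|_L=\frac{1}{2}(\pi^{ij}h_i\wedge h_j+\bar{\pi}^{ij}\bar{h}_i\wedge\bar{h}_j)$; a short local computation, analogous to the last step of the proof of Proposition \ref{transvhol1}, reduces $(L_Y\pi)(\lambda,\mu)=0$ to $\bar{h}_k(\pi^{ij})=0$, i.e., to the holomorphicity of the leafwise coefficients $\pi^{ij}$ with respect to $A|_P$. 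Combined with leafwise Poissonness this is precisely (i), and the proposition follows.
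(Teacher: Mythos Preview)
Your proposal is correct and follows essentially the same route as the paper: both reduce the integrability criterion via Theorem \ref{thdeintegr} and Proposition \ref{echicuSch}, use Remark \ref{obsclCRF} to split the classical CRF condition on $A$ into leafwise integrability of $A|_P$ plus (ii), observe that the first condition (\ref{bunCRFPs}) is automatic from involutivity of $P$, and identify the second condition (\ref{bunCRFPs}) with holomorphicity of the leafwise coefficients $\pi^{ij}$. The only difference is presentational: the paper argues the two implications separately in explicit foliated coordinates $(x^a,z^i,\bar z^i)$, whereas you run the equivalences biconditionally and handle the ``$[\pi,\pi]=0$ on $M$ $\Leftrightarrow$ leafwise Poisson'' step more conceptually via the fact that $[\pi,\pi]\in\Gamma(\wedge^3P)$ restricts to $[\pi|_L,\pi|_L]$.
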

\begin{proof}
Since $P=im\,A$ and $ann\,Q=P^*$, $(A|_P,\pi|_{ann\,Q})$ defines quasi-classical generalized, almost complex structures of the leaves of $P$ (see Introduction). If $\Phi$ is integrable, $A$ is a classical CRF structure and (\ref{clasintegr2}) implies (ii) of the proposition. On the other hand, the first condition (\ref{clasintegr2}) and formula (\ref{NijPP}) imply $\mathcal{N}_{A|_P}=0$. Therefore, $A$ induces complex analytic structures on the leaves of $P$ and $M$ is covered by local charts with real coordinates $(x^a)$ and complex coordinates $(z^i,\bar{z}^i)$ such that $P$ is defined by $dx^a=0$ and $(z^i,\bar{z}^i)$ are complex analytic coordinates along the leaves of $P$. Accordingly, the $0$-eigenbundle $Q^c$ of $A$ and $ann\,Q^c$ have local bases of the form $$X_a=\frac{\partial}{\partial x^a}-t_a^i\frac{\partial}{\partial z^i} -\bar{t}_a^i\frac{\partial}{\partial \bar{z}^i},\;\theta^i=dz^i+t_a^idx^a,\;\bar{\theta}^i= d\bar{z}^i+\bar{t}_a^idx^a.$$

Furthermore, the Poisson bivector field $\pi$, which, in view of (\ref{pilocal}),
has the local expression
\begin{equation}\label{piinPfol}
\pi=\frac{1}{2}(\pi^{il}(z,\bar{z},x)\frac{\partial}{\partial z_i}\wedge \frac{\partial}{\partial z_l}+\bar{\pi}^{il}(z,\bar{z},x)\frac{\partial}{\partial \bar{z}_i}\wedge \frac{\partial}{\partial \bar{z}_l}),\end{equation}
induces Poisson structures $\pi|_{ann\,Q}$ on the leaves of $P$ that have the same local expressions, but, with $x=const.$ Moreover, the second integrability condition (\ref{bunCRFPs}) for $\Phi$ reduces to
$$(L_{\frac{\partial}{\partial\bar{z}^i}}\pi)(\theta^h,\theta^k)= \frac{\partial\pi^{h,k}}{\partial\bar{z}^i}=0,$$ which means that $\pi$ is holomorphic along the leaves. Thus, integrability of $\Phi$ also implies (i).

Conversely, it is clear that condition (i) implies the existence of the local coordinates $(x^a,z^i,\bar{z}^i)$ described above and the local expression (\ref{piinPfol}), where $\partial\pi^{hk}/\partial\bar{z}^i=0$. This local expression implies $[\pi,\pi]=0$, hence, $\pi$ is a Poisson bivector field. The fact that $A$ is classical CRF is ensured by the integrability of $A|_P$ together with condition (ii), which are exactly the two conditions (\ref{clasintegr2}) in our case. Finally, the first condition (\ref{bunCRFPs}) is a consequence of the involutivity of $P$ and the second condition (\ref{bunCRFPs}) holds since we have $\partial\pi^{hk}/\partial\bar{z}^i=0$. Thus, all the conditions of Theorem \ref{thdeintegr} and Proposition \ref{echicuSch} hold and $\Phi$ is integrable.
\end{proof}
\begin{rem}\label{obssympl}{\rm If $\Phi$ is a quasi-classical generalized CRF structure such that the corresponding tensor fields satisfy the condition $im\,\sharp_\pi=im\,A$, then, $P=im\,A$ is the symplectic foliation of the Poisson structure $P$ and $(A|_P,\pi|_{ann\,Q})$ define holomorphic symplectic structures on the leaves of $P$.}\end{rem}

We exemplify Proposition \ref{propPfol} by the following structures.

A generalized almost contact structure of codimension $h$ is a system of tensor fields $(A\in End(TM),Z_a\in\chi(M),\pi\in\chi^2(M),\sigma\in\Omega^2(M),\xi^a\in\Omega^1(M))$ ($\chi(M)=\chi^1(M)$) that satisfies the following conditions \cite{Vstable}
\begin{equation}\label{condF}
\begin{array}{l}\pi(\alpha\circ A,\beta)=\pi(\alpha,\beta\circ A),\;
\sigma(AX,Y)=\sigma(X,AY),\vspace*{2mm}\\A(Z_a)=0,\;\xi^a\circ
A=0,\;i(Z_a)\sigma=0,\;i(\xi^a)\pi=0,\;\xi^a(Z_b)=\delta^a_b,\vspace*{2mm}\\ A^2=-Id-\sharp_\pi\circ\flat_\sigma+\sum_{a=1}^h\xi^a\otimes
Z_a.\end{array}\end{equation}
Furthermore, the structure is {\it normal} if \cite{Vstable}:
\begin{equation}\label{normalitate} \begin{array}{l}
[\pi,\pi]=0,\:R_{(\pi,A)}=0,\vspace*{2mm}\\ L_{Z_a}\pi=0,\:
L_{Z_a}\sigma=0,\:L_{\sharp_\pi\alpha}\xi^a=0,	
\vspace*{2mm}\\	
\mathcal{N}_A(X,Y)=\sharp_\pi(i(X\wedge Y)d\sigma) -
\sum_{a=1}^h(d\xi^a(X,Y))Z_a,\vspace*{2mm}\\
d\sigma_A(X,Y,Z)=\sum_{Cycl(X,Y,Z)}
d\sigma(AX,Y,Z)\vspace*{2mm}\\

[Z_a,Z_b]=0,\; L_{Z_b}\xi^a=0,\;L_{Z_a}A=0,\vspace*{2mm}\\	
(L_{AX}\xi^a)(Y) - (L_{AY}\xi^a)(X)=0.\end{array}\end{equation}

For a generalized almost contact structure of codimension $h$ such that $\sigma=0$, conditions (\ref{condF}) imply the fact that the pair $(A,\pi)$ defines a quasi-classical generalized F structure $\Phi$. Since $-1$ is not an eigenvalue of $\sharp_\pi\circ\flat_\sigma=0$, the normality of the structure $(A,\pi,0,Z_a,\xi^a)$ is characterized by the first four lines of (\ref{normalitate}) \cite{Vstable}. It is easy to see that these conditions imply the integrability of $\Phi$. In particular, if $X,Y\in P=ann\,\{\xi^a\}$ and since $Q=span\{Z_a\}$, we have
\begin{equation}\label{Nijauxcontact}
\mathcal{N}_A(X,Y)=-\sum_a d\xi^a(X,Y)Z_a=\sum_a \xi^a([X,Y])Z_a=pr_Q[X,Y].
\end{equation}
Furthermore, if $X\in P,Y\in Q$,
$$\begin{array}{lll}\mathcal{N}_A(X,Y)&=&-\sum_a d\xi^a(X,Y)Z_a=\sum_a(L_X\xi^a)(Y) \vspace*{2mm}\\ &=&\sum_a(L_{AX'}\xi^a)(Y)=\sum_a(L_{AY}\xi^a)(X)=0,\end{array}$$
where $X=AX'$ (we use $P=im\,A$) and the key fourth equality sign is given by the last line of (\ref{normalitate}) (of course, $AY=0$ because $Y\in Q$). These results about $\mathcal{N}_A$ show that $A$ is a classical CRF structure.
Condition $[Z_a,Z_b]=0$ included in (\ref{normalitate}) shows that the $0$-eigenbundle $Q=span\{Z_a\}$ is a foliation.

The geometric structure of a normal generalized contact structure of codimension $h$ was described in Theorem 3.3 of \cite{Vstable} and includes the $Q$-transversal holomorphic Poisson structure obtained in Proposition \ref{transvhol1}. In particular, Example 3.2 of \cite{Vstable} tells that, if $(N,A,\pi)$ is a holomorphic Poisson manifold and $M$ is a principal torus bundle over $N$ endowed with a connection $\xi$ of curvature form $\Xi$, then, the conditions (a) $i(\sharp_{\pi^h}\alpha)\Xi=0$, (b) $\Xi((A^hX,A^hY))=\Xi(X,Y)$ (where the upper index $h$ denotes the horizontal lift extended by zero on vertical arguments)	 ensure that $(M,A^h,\pi^h)$ is a quasi-classical, generalized CRF structure.

A classical almost contact structure \cite{Bl} is defined by a triple $(A\in End(TM),Z\in\chi(M),\xi\in\Omega^1(M))$ that satisfies (\ref{condF}) for $h=1,\pi=0,\sigma=0$, i.e.,
$$ A^2=-Id+\xi\otimes Z,\,AZ=0,\,A^*\xi=0,\,\xi(Z)=1.$$
Furthermore, the structure is normal if the normality conditions (\ref{normalitate}) hold, under the same restrictions. Then, we remain with the single condition
\begin{equation}\label{normal1} \mathcal{N}_A(X,Y)+d\xi(X,Y)Z=0,\;\forall X,Y\in\chi(M)
\end{equation}
and it implies
$$ L_ZA=0,\,L_Z\xi=0,\,(L_{AX}\xi)(Y)=(L_{AY}\xi)(X).$$

It follows easily that the almost contact structure $(A,Z,\xi)$ is normal iff (i) the F structure $A$ is of the CR type and (ii) $L_ZA=0$. Indeed, formula (\ref{Nijauxcontact}) with $h=1$ shows that the first condition (\ref{clasintegr2}), which is equivalent to $A$ being of the CR type (see Remark \ref{obsclCRF}), coincides with the normality condition (\ref{normal1}) for arguments $X,Y\in P=im\,A$ ($im\,A$ is defined by $\xi=0$). Therefore, normality implies (i) and (ii) and the latter imply the normality conditions for arguments in $P$. Then, for $X\in P$ and $Y=Z$, (\ref{normal1}) becomes $\mathcal{N}_A(X,Z)=\xi([X,Z])Z$ and it is implied by (i) and (ii) because $L_ZA=0$ implies the vanishing of both sides of the equality:
$$\begin{array}{l}
\mathcal{N}_A(X,Z)=-A[AX,Z]+A^2[X,Z]=A(L_ZA)(X)=0,\vspace*{2mm}\\
\xi([X,Z])=\xi([AU,Z])=-\xi((L_ZA)(U)+A[Z,U])=0\;\;\;(U\in P).\end{array}$$

Furthermore, if $(A,Z,\xi)$ is normal, $A$ is a classical CRF structure since it is of the CR type and $\mathcal{N}_A(X,Z)=0$ for $X\in P$.
\begin{defin}\label{contact-Ps} {\rm Let $(A,Z,\xi)$ be an almost contact structure on a manifold $M$. A Poisson bivector field $\pi\in\chi^2(M)$ is a {\it contact-Poisson structure} on $M$ if $(A,\pi)$ is a quasi-classical, generalized CRF structure.}\end{defin}

The integrability conditions of the quasi-classical structure defined by $(A,\pi)$ show that the bivector field $\pi$ is a contact-Poisson structure on $(M,A,Z,\xi)$ iff: (1) $\pi(\alpha\circ A,\beta)=\pi(\alpha,\beta\circ A),\, i(\xi)\pi=0$, (2) $A$ is a classical CRF structure, (3) $\pi$ is a Poisson bivector field, (4) $R_{(\pi,A)}(X,\beta)=0$ whenever $\xi(X)=0,\beta(Z)=0$. Condition (4) may be replaced by the conditions (\ref{bunCRFPs}). In particular, (4) implies $\xi([\sharp_\pi\alpha,X])=0$ whenever $\xi(X)=0$; for $\alpha(Z)=0$ this is the first condition (\ref{bunCRFPs}) and for $\alpha=\xi$ we have $\sharp_\pi\alpha=0$.

Of course, $Q=ker\,A=span\{Z\}$ is a foliation. If we also assume that $L_Z\pi=0$, it easily follows that $\pi$ is $Q$-projectable and we may apply Proposition \ref{transvhol1} and get the $Q$ transversal holomorphic Poisson structure defined by the projections of $A$ and $\pi$. This situation occurs for contact-Poisson structures on normal, almost contact manifolds.
\begin{defin}\label{normalPs} {\rm Let $(A,Z,\xi)$ be a normal almost contact structure on a manifold $M$ and $\pi$ a contact-Poisson structure on $M$. If $L_Z\pi=0$, $\pi$ will be called a {\it normal contact-Poisson structure}.}\end{defin}
\begin{example}\label{exnormPs} {\rm Let $(M,A,Z,\xi)$ be a normal almost contact manifold such that $\xi\wedge d\xi=0$. Then, $P=im\,A$ is integrable and, since $A$ is of the CR type, $A|_P$ defines	 complex structures on the leaves of $P$. Any $Z$-projectable bivector field $\pi$ that produces holomorphic Poisson structures of the leaves of $P$ obviously is a normal contact-Poisson structure on $M$. In particular, this is the case for a cosymplectic manifold in the sense of Blair \cite{Bl} because such a manifold satisfies the condition $d\xi=0$.
The structure $\pi=0$ is a trivial example of a normal contact-Poisson structure on any almost contact manifold $M$, where the subbundle $P$ may not be involutive. This trivial example can be modified as follows. Take $M=N\times N'$ where $N$ is a complex manifold with the complex structure tensor $J$ and the holomorphic Poisson structure $\pi\neq0$ and $N'$ is a normal almost contact manifold with the structure $(A,Z,\xi)$, where $im\,A$ may not be involutive. Then, $(\tilde{A}=J+A,Z,\xi)$ is still a normal almost contact manifold and $im\,\tilde{A}$ may not be involutive. It is easy to check that $\pi$ is a normal contact-Poisson structure on $M$.}\end{example}
\begin{prop}\label{propcaznormal} If $(M,A,Z,\xi)$ is a normal almost contact manifold and $\pi$ is a normal contact-Poisson structure on $M$, then, the generalized almost contact structure $(A,\pi,Z,\xi)$ is normal.\end{prop}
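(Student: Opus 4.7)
The plan is to verify the normality conditions (\ref{normalitate}) for the pentuple $(A,\pi,\sigma=0,Z,\xi)$ with $h=1$. With $\sigma=0$, all $\sigma$-equations hold identically and $[Z_a,Z_b]=0$ is vacuous. The identities $\mathcal{N}_A(X,Y)=-d\xi(X,Y)Z$, $L_ZA=0$, $L_Z\xi=0$ and $(L_{AX}\xi)(Y)=(L_{AY}\xi)(X)$ are all consequences of the underlying almost contact structure being normal, as recorded just before Definition \ref{contact-Ps}. The Poisson condition $[\pi,\pi]=0$ is part of the contact-Poisson hypothesis, and $L_Z\pi=0$ is exactly the extra requirement of Definition \ref{normalPs}. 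Only two items then remain to be established: $(i)$ $R_{(\pi,A)}=0$ on all arguments (Theorem \ref{thdeintegr}(3) supplies it only for $X\in P,\alpha\in ann\,Q$), and $(ii)$ $L_{\sharp_\pi\alpha}\xi=0$ for every $\alpha$.

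For $(i)$, I would first verify that $R_{(\pi,A)}$ is $C^\infty$-bilinear in both slots, a short calculation in which the compatibility $A\circ\sharp_\pi=\sharp_\pi\circ A^*$ cancels the would-be non-tensorial terms. With tensoriality secured, the decompositions $TM=P\oplus\mathrm{span}\{Z\}$ and $T^*M=ann\,Q\oplus\mathrm{span}\{\xi\}$ reduce the problem to four combinations. The case $X\in P,\alpha\in ann\,Q$ is Theorem \ref{thdeintegr}(3). For $X\in P,\alpha=\xi$, both $A^*\xi=0$ and $\sharp_\pi\xi=0$ collapse $R_{(\pi,A)}(X,\xi)$ to $-\sharp_\pi(i(AX)d\xi)$; its vanishing translates into $d\xi(AX,\sharp_\pi\beta)=0$ for every $\beta$, and since $AX,\sharp_\pi\beta\in P=ann\,\xi$ this is the same as $\xi([AX,\sharp_\pi\beta])=0$, which is granted by the first identity of (\ref{bunCRFPs}). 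For $X=Z,\alpha\in ann\,Q$, I would use $L_Z\pi=0$ to commute $L_Z$ past $\sharp_\pi$ and $L_ZA=0$ to commute it past $A$; combined with $AZ=0$, the two surviving terms cancel. The remaining combination $X=Z,\alpha=\xi$ is immediate because every ingredient is killed.

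For $(ii)$, since $\sharp_\pi\alpha\in P=ann\,\xi$ we have $L_{\sharp_\pi\alpha}\xi=i(\sharp_\pi\alpha)d\xi$, so the claim amounts to $d\xi(\sharp_\pi\alpha,Y)=0$ for every $Y$. For $Y\in P$ this again reduces to $\xi([\sharp_\pi\alpha,Y])=0$, covered by the first identity of (\ref{bunCRFPs}). For $Y=Z$, the hypothesis $L_Z\pi=0$ gives $[Z,\sharp_\pi\alpha]=\sharp_\pi(L_Z\alpha)\in im\,\sharp_\pi\subseteq P$, hence $\xi([Z,\sharp_\pi\alpha])=0$ and $d\xi(\sharp_\pi\alpha,Z)=0$.

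The main obstacle is the upgrade of condition $(3)$ of Theorem \ref{thdeintegr} from its restricted domain to the full $R_{(\pi,A)}=0$; this is precisely the step that forces us to bring both $L_ZA=0$ and $L_Z\pi=0$ into play in an essential way, together with the small verification that $R_{(\pi,A)}$ is actually $C^\infty$-bilinear. Everything else is bookkeeping driven by the decompositions $TM=P\oplus Q$, $T^*M=ann\,Q\oplus ann\,P$ and the relations $\sharp_\pi\xi=0$, $A^*\xi=0$.
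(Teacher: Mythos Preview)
Your plan is correct and essentially matches the paper's own proof. Both arguments isolate the same two remaining conditions, $L_{\sharp_\pi\alpha}\xi=0$ and the upgrade of $R_{(\pi,A)}=0$ from its restricted domain to all of $TM\times T^*M$, and both handle them by the decompositions $TM=P\oplus\mathrm{span}\{Z\}$, $T^*M=ann\,Q\oplus\mathrm{span}\{\xi\}$, invoking $L_ZA=0$, $L_Z\pi=0$ and the first identity of (\ref{bunCRFPs}) in the appropriate cases. The only cosmetic difference is that the paper labels the extra cases as $(A^2X,\xi)$, $(Z,A^{*2}\alpha)$, $(Z,\xi)$, which coincide with yours since $A^2=-Id$ on $P$ and $A^{*2}=-Id$ on $ann\,Q$; your explicit mention of the $C^\infty$-bilinearity of $R_{(\pi,A)}$ makes the reduction step cleaner than in the paper, where it is used tacitly.
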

\begin{proof} Under the hypotheses, we already have conditions (\ref{normalitate}) except for \begin{equation}\label{aux10}L_{\sharp_\pi\alpha}\xi=0,\;R_{(\pi,A)}(X,\beta)=0, \,\forall (X,\alpha).\end{equation}
The first condition (\ref{aux10}) follows since, for $\xi(X)=0$, an already mentioned consequence of property (4) of contact-Poisson structures gives
\begin{equation}\label{aux30}
<L_{\sharp_\pi\alpha}\xi,X>=-<\xi,[{\sharp_\pi\alpha},X]>=0\end{equation}
and
$$<L_{\sharp_\pi\alpha}\xi,Z>=-<\xi,[\sharp_\pi\alpha,Z]>= <\xi,\sharp_{L_Z\pi}\alpha+\sharp_\pi(L_Z\alpha)>=-<L_Z\alpha,\sharp_\pi\xi>=0.$$

Furthermore, the second condition (\ref{aux10}) holds under the restrictions of (4) and we have to check it for the arguments that do not satisfy these restrictions: $(A^2X,\xi),(Z,A^{*2}\alpha),(Z,\xi)$. We have
$$R_{(\pi,A)}(A^2X,\xi)=\sharp_\pi[L_{A^2X}(A^*\xi)-L_{A^3X}\xi]-(L_{\sharp_\pi\xi} A)(A^2X)
=\sharp_\pi(L_{AX}\xi)$$
and, also, for all $\alpha$,
$$<\alpha,\sharp_\pi(L_{AX}\xi)>=-<L_{AX}\xi,\sharp_\pi\alpha>=<\xi,[\sharp_\pi\alpha,AX]>=0.$$

Then, $$\begin{array}{lcl}
R_{(\pi,A)}(Z,A^{*2}\alpha)&=&\sharp_\pi[L_Z(A^{*3}\alpha)-L_{AZ}\alpha] -(L_{\sharp_\pi(A^{*2}\alpha)}A)(Z)\vspace*{2mm}\\	& =&-\sharp_\pi(A^*L_Z\alpha)-[\sharp_\pi(A^{*2}\alpha),AZ]+A[\sharp_\pi(A^{*2}\alpha),Z]\vspace*{2mm}\\	 &=&-A\sharp_\pi(L_Z\alpha)-A(\sharp_{L_Z\pi}(A^{*2}\alpha)-A^3\sharp_\pi(L_Z\alpha)=0.\end{array}$$

Finally, $R_{(\pi,A)}(Z,\xi)=0$ follows straightforwardly from the definition of $R_{(\pi,A)}$.
\end{proof}

If $(M_u,A_u,Z_u,\xi_u)$, $u=1,2$, are two almost contact manifolds, the formula
$$J(X_1,X_2)=(A_1X_1-\xi_2(X_2)Z_1,A_2X_2+\xi_1(X_1)Z_2),$$ where $X_u\in T_{x_u}M_u$, $x_u\in M_u$, $u=1,2$, defines an almost complex structure on $M_1\times M_2$ and it was proven in \cite{Mor} that $J$ is integrable iff the two almost contact structures are normal. In this case, the following proposition is true.
\begin{prop}\label{produshol} Let $(M_u,A_u,Z_u,\xi_u)$, $u=1,2$, be two normal almost contact manifolds	 that have normal contact-Poisson structures $\pi_u$. Then, $\pi=\pi_1+\pi_2$ is a holomorphic Poisson structure on the complex analytic manifold $(M_1\times M_2,J)$.\end{prop}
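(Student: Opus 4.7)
The plan is to show that the pair $(J,\pi)$ defines an integrable quasi-classical generalized CRF structure on $M_1\times M_2$. Since $ker\,J=0$, such a structure is in fact a generalized complex structure of quasi-classical type, which by the discussion in the Introduction is equivalent to $\pi$ being a holomorphic Poisson structure on $(M_1\times M_2,J)$.

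Morimoto's cited theorem gives the integrability of $J$. A direct computation from the formula for $J$ shows that its $+i$-eigenbundle decomposes as $H_J=H_1\oplus H_2\oplus\langle Z_1-iZ_2\rangle$, where $H_u$ denotes the $+i$-eigenbundle of $A_u$ lifted to the product. Dually, a basis of $H_J^*=ann\,\bar{H}_J$ is given by the lifts of the $(1,0)$-forms $\theta_{u,i}$ of $A_u$ together with $\zeta=\frac{1}{2}(\xi_1+i\xi_2)$. By Proposition \ref{proppilocal}, each $\pi_u$ takes values in $H_u\wedge H_u\oplus\bar{H}_u\wedge\bar{H}_u\subseteq H_J\wedge H_J\oplus\bar{H}_J\wedge\bar{H}_J$, so $\pi$ is of type $(2,0)+(0,2)$ with respect to $J$. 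Moreover, $\pi$ is Poisson since $[\pi,\pi]=[\pi_1,\pi_1]+2[\pi_1,\pi_2]+[\pi_2,\pi_2]=0$, the cross Schouten bracket vanishing by the product structure.

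It remains to verify condition (3) of Theorem \ref{thdeintegr}, which by Proposition \ref{echicuSch} reduces, in view of $P=T(M_1\times M_2)$, to $(L_Y\pi)(\lambda,\mu)=0$ for $Y\in\bar{H}_J$ and $\lambda,\mu\in H_J^*$. I would proceed by cases on $Y$. If $Y\in\bar{H}_1$ (a lift from $M_1$), then $L_Y\pi_2=0$ because $\pi_2$ is constant along $M_1$-directions, so $L_Y\pi=L_Y\pi_1$ takes values in $P_1^c\wedge P_1^c$; only the $TM_1$-components of $\lambda,\mu$ contribute, and the relevant elements of $H_J^*$ restrict to $(1,0)$-forms on $(M_1,A_1)$. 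Applying Proposition \ref{echicuSch} to the normal contact-Poisson structure $\pi_1$ on $M_1$ yields $(L_Y\pi_1)(\lambda,\mu)=0$. The case $Y\in\bar{H}_2$ is symmetric. For the mixed direction $Y=Z_1+iZ_2$, the product structure gives $L_Y\pi=L_{Z_1}\pi_1+iL_{Z_2}\pi_2$, which vanishes by the normality assumption $L_{Z_u}\pi_u=0$ of Definition \ref{normalPs}.

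The main obstacle is the case analysis above: correctly identifying the eigenbundles of $J$ on the product (which mix the $Z_u$-directions) and handling the component generated by $Z_1-iZ_2\in H_J$. The crucial point that makes it all work is the normality hypothesis $L_{Z_u}\pi_u=0$, which exactly kills the mixed-direction contribution, while the vanishing of the Schouten concomitant of each $\pi_u$ on $M_u$ handles the pure cases via Proposition \ref{echicuSch}.
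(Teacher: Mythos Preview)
Your approach differs from the paper's, which directly computes the Schouten concomitant $R_{(\pi,J)}$ on product-type arguments $(X_1,0),(\alpha_1,0)$, $(X_1,0),(0,\alpha_2)$, etc., invoking Proposition \ref{propcaznormal} (the full $R_{(\pi_u,A_u)}=0$) and the identity $\xi_u([\sharp_{\pi_u}\alpha,X])=0$. Your route through the eigenbundle description of $J$ and Proposition \ref{echicuSch} is a legitimate alternative and arguably more conceptual.

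There is, however, a genuine gap in the case $Y\in\bar{H}_1$. You assert that ``the relevant elements of $H_J^*$ restrict to $(1,0)$-forms on $(M_1,A_1)$'', but this is false: your own basis element $\zeta=\tfrac{1}{2}(\xi_1+i\xi_2)\in H_J^*$ restricts to $\tfrac{1}{2}\xi_1$ on $TM_1$, and $\xi_1\in Q_1^*=ann\,P_1$ is certainly not in $H_1^*=ann(\bar H_1\oplus Q_1^c)$. Hence Proposition \ref{echicuSch} on $M_1$ does not by itself yield $(L_Y\pi_1)(\lambda|_{TM_1},\mu|_{TM_1})=0$; the cross terms $(L_Y\pi_1)(\xi_1,\mu_1)$ with $\mu_1\in H_1^*$ must be handled separately.

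These terms do vanish, but you must say why. Using $i(\xi_1)\pi_1=0$ one gets
\[
(L_Y\pi_1)(\xi_1,\mu_1)=-\pi_1(L_Y\xi_1,\mu_1)=\langle L_Y\xi_1,\sharp_{\pi_1}\mu_1\rangle=-\xi_1([Y,\sharp_{\pi_1}\mu_1]),
\]
and the right-hand side is zero by the \emph{first} condition (\ref{bunCRFPs}) on $M_1$ (equivalently, by the normality consequence $\xi_1([\sharp_{\pi_1}\beta,X])=0$ for $X\in P_1$, $\beta\in ann\,Q_1$, which is (\ref{aux30}) in the paper). Once this is added, your argument goes through. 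Note that this is precisely the place where the paper's proof also invokes (\ref{aux30}), so the two approaches end up using the same ingredients, just packaged differently.
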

\begin{proof} Obviously, $\pi$ is a Poisson structure. Hence, conditions (\ref{Crainic}) reduce to
\begin{equation}\label{aux20}
R_{(\pi,J)}((X_1,X_2),(\alpha_1,\alpha_2))=0,\end{equation}
where, because $R$ is a tensor on $M_1\times M_2$, we may assume that $X_u\in\chi(M_u),\alpha_u\in\Omega^1(M_u)$. We need to check (\ref{aux20}) for the following type of arguments: (1) $(X_1,0),(\alpha_1,0)$, (2) $(X_1,0),(0,\alpha_2)$, (3) $(0,X_2),(\alpha_1,0)$ (4) $(0,X_2),(0,\alpha_2)$.

Firstly, we notice the expression of the transposed operator $J^*$:
$$J^*(\alpha_1,\alpha_2)=(A_1^*\alpha_1+\alpha_2(Z_2)\xi_1, A_2^*\alpha_2-\alpha_1(Z_1)\xi_2).$$
Now, using the definition (\ref{Schouten}) of the Schouten concomitant, a straightforward calculation gives
$$R_{(\pi,J)}((X_1,0),(\alpha_1,0))=(R_{(\pi_1,A_1)}(X_1,\alpha_1), \xi_1([\sharp_{\pi_1}\alpha_1,X_1])Z_2),$$
which vanishes because the normality of the first manifold implies $R_{(\pi_1,A_1)}(X_1,\alpha_1)=0$ and (\ref{aux30}) for the first manifold is $\xi_1([\sharp_{\pi_1}\alpha_1,X_1])=0$. In the same way, we will obtain (\ref{aux20}) in case (4).

Then, starting with (\ref{Schouten}) we get
$$\begin{array}{lcl}
R_{(\pi,J)}(X_1,0),(0,\alpha_2)&=& (\alpha_2(Z_2)\sharp_{\pi_1}(L_{X_1}\xi_1),0)
-(0,\xi_1(X_1)\sharp_{\pi_2}(L_{Z_2}\alpha_2))\vspace*{2mm}\\ &&
-(0,\xi_1(X_1)[\sharp_{\pi_2}\alpha_2,Z_2]).
\end{array}$$
The property $L_{Z_2}\pi_2=0$ implies the cancelation of the last two terms and $\sharp_{\pi_1}(L_{X_1}\xi_1)=0$ follows from the evaluation on any $1$-form $\beta_1$:
$$<\sharp_{\pi_1}(L_{X_1}\xi_1),\beta_1>=-<L_{X_1}\xi_1,\sharp_{\pi_1}\beta_1>=
<\xi_1,[X_1,\sharp_{pi_1}\beta_1]>\stackrel{(\ref{aux30})}{=}0.$$
Thus, (\ref{aux20}) holds in case (2). In case (3), (\ref{aux20}) follows similarly.
\end{proof}
\section{Poisson cohomology}
In this section we make some remarks on the Poisson cohomology defined by the Poisson structure $\pi$ of a quasi-classical generalized CRF manifold $(M,A,\pi)$. This is a particular case of a more general situation worthy of consideration.

In an older terminology, a subbundle $P\subseteq TM$ is a (non)holonomic submanifold of $M$ (``non" is omitted in the involutive case).
\begin{defin}\label{nonholPssub} {\rm A (non)holonomic submanifold $P$ of the Poisson manifold $(M,\pi)$ is a {\it (non)holonomic Poisson submanifold} if: 1) $im(\sharp_\pi:T^*M\rightarrow TM)\subseteq P$, 2) every infinitesimal transformation $\sharp_\pi\alpha$ ($\alpha\in\Omega^1(M)$) preserves the distribution $P$.}\end{defin}

Condition 2) means that
\begin{equation}\label{Hamilt-P} [\sharp_\pi\alpha,X]\in P,\;\forall\alpha\in\Omega^1(M),\, X\in P.\end{equation}
If condition 1) holds, condition 2) is coherent since (\ref{Hamilt-P}) is consistent with the multiplication of either $\alpha$ or $X$ by a function.

If $\Phi$ is a quasi-classical generalized CRF structure defined by the pair $(A,\pi)$, Proposition \ref{propalg} and Proposition \ref{echicuSch} show that $P=im\,A$ is a (non)holonomic Poisson submanifold of $(M,\pi)$.

If $\pi$ of Definition \ref{nonholPssub} is non-degenerate, we have $P=TM$ and (\ref{Hamilt-P}) automatically holds. If $\pi$ is a regular Poisson structure and $P=im\,\sharp_\pi$, $P$ is the symplectic foliation of $\pi$ (hence, holonomic) and (\ref{Hamilt-P}) again holds because $X$ is tangent to the symplectic leaves of $\pi$.
In the general holonomic case, $P$ is integrable and condition 1) of Definition \ref{nonholPssub} means that every leaf of $P$ is a Poisson submanifold, i.e., a union of open subsets of symplectic leaves of $\pi$ \cite{CZ}. Condition 2) is again implied because the brackets compute along the leaves of $P$. Thus, the notion of a (non)holonomic Poisson submanifold is a natural extension of the notion of a Poisson submanifold.

An example of a nonholonomic Poisson submanifold is given below.
\begin{example}\label{exnonholsb} {\rm Take
$M=\mathds{R}^5$ with the cartesian coordinates $(y^1,y^2,x^1,x^2,x^3)$ and the Poisson bivector field $$\pi=f(y^1,y^2,x^1,x^2,x^3)\frac{\partial}{\partial y^1}\wedge\frac{\partial}{\partial y^2}.$$ Consider the distribution $$P=span\{\frac{\partial}{\partial y^1}, \frac{\partial}{\partial y^2}, X_1=\frac{\partial}{\partial x^1} +x^2\frac{\partial}{\partial x^3}, X_2=\frac{\partial}{\partial x^2}-x^1\frac{\partial}{\partial x^3}\}.$$ Then, $P$ includes $im\,\sharp_\pi$, which is zero where $f$ vanishes and is spanned by $\partial/\partial y^1,\partial/\partial y^2$ where $f\neq0$. $P$ satisfies condition 2) of Definition \ref{nonholPssub} since $[\partial/\partial y^a,X_u]=0$ for $a=1,2,u=1,2$ and
$P$ is nonholonomic since $[X_1,X_2]=-2(\partial/\partial x^3)$ is not contained in $P$.}\end{example}

The Poisson cohomology of a Poisson manifold $(M,\pi)$ is the de Rham cohomology of the Lie algebroid $(T^*M,\sharp_\pi,\{\,,\,\}_\pi)$, i.e., the cohomology of the cochain complex $(\chi^k(M),d_\pi)$, where the coboundary $d_\pi$ is given by \cite{Psgeom}
\begin{equation}\label{coboundary}\begin{array}{r}
d_{\pi}w(\lambda_0,...,\lambda_k)= \sum_{h=0}^k(-1)^h(\sharp_\pi\lambda^h)(w(\lambda_0,...,
\hat{\lambda}_h,...,\lambda^k))\vspace*{2mm}\\	+ \sum_{h<s}(-1)^{h+s}w(\{\lambda_h,\lambda_s\}_\pi,\lambda_0,..., \hat{\lambda}_h,...,\hat{\lambda}_s,...,\lambda_k),
\end{array}\end{equation}
the hat denoting a missing argument. Another expression of the coboundary is $d_{\pi}w=-[\pi,w]$ (Schouten-Nijenhuis bracket). In fact, every Lie algebroid $\mathcal{A}$ has de Rham cohomology spaces $H^k(\mathcal{A})$ defined by the cochain complex $(\Gamma\wedge^k\mathcal{A}^*,d_{\mathcal{A}})$, where $d_{\mathcal{A}}$ has the same expression as $d_\pi$, but, $\sharp_\pi$ is replaced by the anchor of $\mathcal{A}$ and the $\pi$-bracket is replaced by the bracket of $\mathcal{A}$.
The Poisson cohomology spaces are $H^k(M,\pi)=H^k(T^*M)$.

Concerning the Lie algebroid $T^*M$, we	 notice the following result.
\begin{prop}\label{propLiealg}
If $P$ is a (non)holonomic Poisson submanifold of the Poisson manifold $(M,\pi)$, the Lie algebroid $(T^*M,\sharp_\pi,\{\,,\,\}_\pi)$ induces a Lie algebroid structure on $P^*=T^*M/(ann\,\pi)$.
Furthermore, if we put anchor and bracket zero on $ann\,P$, the sequence
\begin{equation}\label{exactseq} 0\rightarrow ann\,P\stackrel{\iota}{\rightarrow} T^*M \stackrel{p}{\rightarrow}T^*M/(ann\,P)\rightarrow0,\end{equation}
where $\iota$ is the inclusion and $p$ is the natural projection, is an exact sequence of Lie algebroids over $M$.\end{prop}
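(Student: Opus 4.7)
The plan is to observe that the two defining properties of a (non)holonomic Poisson submanifold are precisely the descent conditions one needs to quotient the Lie algebroid $(T^*M,\sharp_\pi,\{\,,\,\}_\pi)$ by $ann\,P$.

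First I would translate condition 1) of Definition \ref{nonholPssub} into the statement $\sharp_\pi(ann\,P)=0$. For $\alpha\in ann\,P$ and any $\beta\in T^*M$, the identity $\beta(\sharp_\pi\alpha)=\pi(\beta,\alpha)=-\alpha(\sharp_\pi\beta)$ vanishes because $\sharp_\pi\beta\in P$, and since $\beta$ was arbitrary, $\sharp_\pi\alpha=0$. Consequently the anchor factors to a well-defined bundle map $\bar\sharp_\pi:T^*M/ann\,P\to P\subseteq TM$, which is the candidate anchor of the quotient algebroid.

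Next, using the Koszul formula (\ref{pi1form}), I would verify that $ann\,P$ is an ideal, that is, $\{ann\,P,T^*M\}_\pi\subseteq ann\,P$. If $\alpha\in ann\,P$ and $\beta\in T^*M$, the relations $\sharp_\pi\alpha=0$ and $\pi(\alpha,\beta)=-\alpha(\sharp_\pi\beta)=0$ reduce (\ref{pi1form}) to $\{\alpha,\beta\}_\pi=-L_{\sharp_\pi\beta}\alpha$. For any $X\in P$, condition 2) of Definition \ref{nonholPssub} applied to $\beta$ gives $[\sharp_\pi\beta,X]\in P$, so
$$(L_{\sharp_\pi\beta}\alpha)(X)=(\sharp_\pi\beta)(\alpha(X))-\alpha([\sharp_\pi\beta,X])=0,$$
using also $\alpha(X)=0$. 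In particular $\{\,,\,\}_\pi$ restricts to zero on $ann\,P$. Hence the Koszul bracket descends to a bracket on $P^*=T^*M/ann\,P$, and the Jacobi identity together with the Leibniz rule pass automatically to the quotient from $T^*M$, yielding the asserted Lie algebroid structure with anchor $\bar\sharp_\pi$.

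For the exact sequence, the zero anchor and zero bracket on $ann\,P$ trivially define a Lie algebroid, and the computation above shows that $\iota$ is a Lie algebroid morphism: $\sharp_\pi\circ\iota=0$ and $\{\iota\alpha_1,\iota\alpha_2\}_\pi=0$ for $\alpha_1,\alpha_2\in ann\,P$. The projection $p$ is a morphism by the very definition of the quotient structure ($\bar\sharp_\pi\circ p=\sharp_\pi$ and $\{p\alpha,p\beta\}=p\{\alpha,\beta\}_\pi$), and exactness as vector bundles is clear. The only substantive step is the second paragraph, where condition 2) of Definition \ref{nonholPssub} is used to secure the ideal property of $ann\,P$; everything else is bookkeeping.
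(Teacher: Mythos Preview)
Your proof is correct and follows the same strategy as the paper: show that $\sharp_\pi$ kills $ann\,P$, show that $ann\,P$ is an ideal for $\{\,,\,\}_\pi$, and conclude that anchor and bracket descend to $P^*=T^*M/ann\,P$. Your verification of the ideal property is in fact more direct than the paper's, since you evaluate $\{\alpha,\beta\}_\pi=-L_{\sharp_\pi\beta}\alpha$ on an arbitrary $X\in P$ and invoke condition 2) of Definition \ref{nonholPssub}, whereas the paper pairs with vectors of the form $\sharp_\pi\gamma$ and appeals to the Poisson identity $[\sharp_\pi\alpha,\sharp_\pi\gamma]=\sharp_\pi\{\alpha,\gamma\}_\pi$.
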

\begin{proof}
Equality (\ref{aux1}) and its consequence that $im(\sharp_\pi:T^*M\rightarrow TM)\subseteq P$ is equivalent to $\sharp_\pi(ann\,P)=0$ hold for any pair $(\pi\in\chi^2(M), P\subseteq TM)$. Hence, if $im\,\sharp_\pi\subseteq P$, we get an induced morphism $\sharp'_\pi:T^*M/(ann\,P)\approx P^*\rightarrow P$.

Then, since $\sharp_\pi(ann\,P)=0$, formula (\ref{pi1form}) shows that, $\forall\alpha,\beta\in ann\,P$ one has $\{\alpha,\beta\}_\pi=0$. Moreover, if only one of the forms, say $\beta$, belongs to $ann\,P$, then, $\forall\gamma\in\Omega ^1(M)$,
$$\begin{array}{l} <\gamma,\sharp_\pi\{\alpha,\beta\}_\pi>=
<\gamma,\sharp_\pi(L_{\sharp_\pi\alpha}\beta>= -<L_{\sharp_\pi\alpha}\beta,\sharp_\pi\gamma>\vspace*{2mm}\\
=(\sharp_\pi\alpha)(\pi(\beta,\gamma)+<\beta,[\sharp_\pi\alpha,\sharp_\pi\gamma]> =<\beta,\sharp_\pi\{\alpha,\gamma\}_\pi>=0, \end{array}$$
where the penultimate equality sign is justified by the fact that $\pi$ is Poisson and the ultimate is justified by $\sharp_\pi(ann\,P)=0$. Thus, $\{\alpha,\beta\}_\pi\in ann\sharp_\pi$.

Accordingly, the formula
$$\{[\alpha],[\beta]\}_\pi=[\{\alpha,\beta\}_\pi],$$
where brackets denote classes in $T^*M/(ann\,P)$, yields a well defined bracket on $\Gamma P^*$. Obviously, if the Jacobi identity holds for the bracket of $1$-forms, which is true if $\pi$ is Poisson, it also holds for the bracket of classes.

The above proves the existence of the induced Lie algebroid structure of $P^*$. The exactness of the sequence (\ref{exactseq}) is obvious.
\end{proof}
\begin{rem}\label{obdDirac} {\rm Proposition \ref{propLiealg} may be generalized as follows. Let $L\subseteq\mathbf{T}M$ be a Dirac structure (i.e., a maximally $g$-isotropic subbundle closed under Courant brackets) on a manifold $M$ and $S=pr_{TM}L$. For any (non)holonomic submanifold $P\subseteq TM$, we denote \cite{V-Dircoupl}:
$$\begin{array}{l}
\tilde{H}(L,P)=\{(X,\alpha)\in L,\,/\,\alpha\in ann\,P\}=L\cap(TM\oplus ann\,P) \subseteq\mathbf{T}M,\vspace*{2mm}\\ H(L,P)=\{X\in TM,\;/\exists\alpha\in ann\,P,\;(X,\alpha)\in L\}=pr_{TM}\tilde{H}(L,P). \end{array}$$
Then, we will say that $P$ is a {\it(non)holonomic Dirac submanifold} of $(M,L)$ if (1) $S\subseteq P$, (2) for any vector fields $Z\in S,X\in P$, $[Z,X]\in P$.
Then, if $L\cap(ann\,P)$ is a differentiable vector bundle ($ann\,P$ is identified with a subspace of $TM\oplus T^*M$ by $\alpha\mapsto(0,\alpha)$ ($\alpha\in ann\,P$)), the sequence
\begin{equation}\label{exactseqD} 0\rightarrow L\cap(ann\,P) \stackrel{\iota}{\rightarrow} L \stackrel{p}{\rightarrow}L/L\cap(ann\,P)\rightarrow0,\end{equation}
with the brackets induced by the Courant bracket is an exact sequence of Lie algebroids. By (\ref{Cbracket}), the restriction of the Courant bracket to $T^*M$, and in particular to pairs $(0,\alpha),(0,\beta)\in L\cap ann\,P$, is zero. Also, for $\alpha\in ann\,P$ and $(Y,\beta)\in L$, we get
$$[(0,\alpha),(Y,\beta)]=(0,-L_Y\alpha)\in L\cap ann\,P$$
because, if $Z\in P$, $$L_Y\alpha(Z)=-<\alpha,[Y,Z]>\stackrel{(2)}{=}0.$$
Thus, the Courant bracket induces a Lie algebroid bracket on the quotient that appears in (\ref{exactseqD}).
The case of a Poisson manifold $(M,\pi)$ is $L=graph\,\sharp_\pi$.}\end{rem}

In the proof of Proposition \ref{propLiealg} we saw that, if $P$ is a (non)holonomic Poisson submanifold of $(M,\pi)$, $ann\,P$ is an abelian ideal of $(\Omega^1(M),\{\,,\,\}_\pi)$. This property was encountered in the case $P=im\,\sharp_\pi$, where $\pi$ is a regular Poisson structure, and it was used to define a spectral sequence that converges to the Poisson cohomology of $\pi$ \cite{Psgeom}. Below, we recall the definition of this spectral sequence, referring to an arbitrary (non)holonomic Poisson submanifold. As a matter of fact, the construction of the spectral sequence involves a complementary subbundle $Q$ of $P$, which yields the decomposition $TM=Q\oplus P$. In the case of a quasi-classical generalized CRF structure with associated tensors $(A,\pi)$, one has a canonical spectral sequence defined by $P=im\,A,Q=ker\,A$.
The decomposition $TM=P\oplus Q$ implies $$T^*M=Q^*\oplus P^*=(ann\,P)\oplus(ann\,Q).$$ Accordingly, we may transfer the Lie algebroid structure of $P^*$ given by Proposition  \ref{propLiealg} to $ann\,Q$ and replace $L/L\cap(ann\,P)$ by $ann\,Q$ in (\ref{exactseqD}).

The decomposition of $T^*M$ produces a bi-grading $\chi^k(M)=\sum_{i+j=k}\chi^{ij}(M)$, where $i,j\geq0$ and $i$ is the $Q$-degree, $j$ is the $P$-degree (i.e., $w\in\chi^{ij}(M)$ vanishes unless evaluated on $i$ $1$-forms in $Q^*=ann\,P$ and $j$ $1$-forms in $P^*=ann\,Q$).
Then, since $\sharp_\pi(ann\,P)=0$ and $ann\,P$ is an abelian ideal of $(\Omega^1(M),\{\,,\,\}_\pi)$, if we count the arguments in $d_\pi w$ given by (\ref{coboundary}) for $w\in\chi^{ij}(M)$, we see that $d_\pi=\sigma'_{-1,2}+\sigma''_{0,1}$, where the lower indices indicate the grade increments. Furthermore, the coboundary property $d_\pi^2=0$ is equivalent to
\begin{equation}\label{propsigma}\sigma'^2=0,\,\sigma''^2=0,\, \sigma'\circ\sigma''+\sigma''\circ\sigma'=0.\end{equation}

Formula (\ref{coboundary} yields the following expression of $\sigma',\sigma''$ for $w\in\chi^{ij}(M)$ \cite{Psgeom}
\begin{equation}\label{exprsigma} \begin{array}{l}
(\sigma'w)(\alpha_0,...,\alpha_{i-2},\beta_0,...,\beta_{j+1})\vspace*{2mm}\\
=\sum_{h<k=0}^{j+1}(-1)^{h+k}w(\{\beta_h,\beta_k\}_\pi,\alpha_0,...,\alpha_{i-2}, \beta_0,...,\hat{\beta}_h,...,\hat{\beta}_k,...,\beta_{j+1}),\vspace*{2mm}\\
(\sigma''w)(\alpha_0,...,\alpha_{i-1},\beta_0,...,\beta_j)\vspace*{2mm}\\
=\sum_{h=0}^j(-1)^{i+h}(\sharp_\pi\beta_h)(w(\alpha_0,..., \alpha_{i-1},\beta_0,...,\hat{\beta}_h,...,\beta_j)\vspace*{2mm}\\
+\sum_{h=0}^{i-1}\sum_{k=0}^j(-1)^{i+h+k}w(\{\alpha_h,\beta_k\}_\pi, \alpha_0,...,...,\hat{\alpha}_h,...,\alpha_{i-1},\beta_0,...,\hat{\beta}_k,..., \beta_j)\vspace*{2mm}\\ + \sum_{h<k=0}^j(-1)^{h+k}w(\alpha_0,...,\alpha_{i-1},\{\beta_h,\beta_k\}_\pi,
\beta_0,...,\hat{\beta}_h,...,\hat{\beta}_k,...,\beta_j),\end{array}\end{equation}
where the arguments $\alpha\in ann\,P$ and $\beta\in ann\,Q$.
\begin{rem}\label{obscohpart} {\rm The restriction of $\sigma''$ to $\sum_j\chi^{0j}(M)$ defines the cohomology of the Lie algebroid $P^*$. On the other hand, $H^j(ann\,P)=\Gamma\wedge^jQ$ since $ann\,P$ has zero anchor and bracket.}\end{rem}

Now, we define a filtration of the cochain complex $(\chi^\bullet(M),d_\pi)$ by the spaces of filtration degree $h$:
$$F_h(M)=(\oplus_{i=0}^q\chi^{i,h})\oplus(\oplus_{i=0}^q\chi^{i,h+1}) \oplus\,...\,\oplus(\oplus_{i=0}^q\chi^{ip}),$$
where $q=rank\,Q,p=rank\,P$. Obviously, $F_h(M)\supseteq F_{h+1}(M)$ and the type increments of $\sigma',\sigma''$ show that $d_\pi(F_h)\subseteq F_h$.

The filtration $F_h(M)$ produces a spectral sequence $\{E_r^{ij}(M,\pi)\}$, which converges to the Poisson cohomology of $(M,\pi)$. The first terms of the spectral sequence are given by Theorem 5.12 of \cite{Psgeom} since the proof of that theorem holds in the present case too:
$$\begin{array}{l} E_0^{ij}(M,\pi)=E_1^{ij}(M,\pi)=\chi^{ji}(M),\vspace*{2mm}\\ E_2^{ij}(M,\pi)=H^i(\chi^{j\bullet}(M),\sigma'').\end{array}$$
Notice that $\chi^{jk}(M)=\Omega^k(P^*,\wedge^jQ)$, the space of the $k$-forms on $P^*$ with values in $\wedge^jQ$.

Furthermore, (\ref{propsigma}) show that $\sigma'$ induces an action on the cohomology spaces that give $E_2^{ij}$ and the definitions related with spectral sequences yield
$$ E_3^{ij}(M,\pi)=H^i(H^\bullet(\chi^{j\bullet}(M),\sigma''),\sigma').$$

In the case of a quasi-classical generalized CRF manifold $(M,A,\pi)$ there exists a further grade decomposition induced by $P=H\oplus\bar{H}$. In order to express it in a simple way, we consider the local basis $(h_i\in H,\bar{h}_i\in\bar{H},q_j\in Q)$ introduced in Proposition \ref{proppilocal} and the corresponding dual cobasis $(\kappa^i\in H^*,\bar{\kappa}^i\in\bar{H},\xi^j\in Q^*)$; $(\kappa^i\in H^*,\bar{\kappa}^i\in\bar{H})$ is a basis of $ann\,Q$ and $(\xi^j)$ is a basis of $ann\,P$. We will say that $w\in\chi^k(M)$ is of triple grade $(a,b,c)$ if its local expression is
$$ w=\frac{1}{a!b!c!}w^{l_1...l_ai_1...i_bj_1...j_c}q_{l_1}
\wedge...\wedge q_{l_a}\wedge h_{i_1}\wedge...\wedge h_{i_b}
\wedge\bar{h}_{j_1}\wedge...\wedge\bar{h}_{j_c},$$
where the coefficients are skew-symmetric in each of the three groups of indices and $a+b+c=k$.
The space of such multivectors will be denoted by $\chi^{abc}(M)$.
\begin{prop}\label{sigma2desc} For a quasi-classical generalized CRF manifold, one has a decomposition
\begin{equation}\label{descsigma2} \sigma''=\sigma''_H+\sigma''_{\bar{H}}, \end{equation}
where $\sigma''_H:\chi^{abc}(M)\rightarrow\chi^{a,b+1,c}(M),\;
\sigma''_{\bar{H}}:\chi^{abc}(M)\rightarrow\chi^{a,b,c+1}(M)$ and $\sigma''^{2}_H= \sigma''^{2}_{\bar{H}}=0$, $\sigma''_H\circ\sigma''_{\bar{H}}+\sigma''_{\bar{H}}\circ\sigma''_H=0$.
\end{prop}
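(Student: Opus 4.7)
The plan is to exploit the decomposition $\pi=\pi_H+\pi_{\bar{H}}$ provided by Proposition \ref{proppilocal}, where $\pi_H\in\Gamma(\wedge^2 H)$ and $\pi_{\bar{H}}\in\Gamma(\wedge^2\bar{H})$ are the (complex) bivector components of $\pi$ relative to the eigenspace decomposition of $A$. Since the anchor $\sharp_\pi$ and the bracket $\{\,,\,\}_\pi$ from (\ref{pi1form}) are $\mathbb{R}$-linear in $\pi$, we have $\sharp_\pi=\sharp_{\pi_H}+\sharp_{\pi_{\bar{H}}}$ and $\{\,,\,\}_\pi=\{\,,\,\}_{\pi_H}+\{\,,\,\}_{\pi_{\bar{H}}}$, and substitution into (\ref{exprsigma}) gives $\sigma''=\sigma''_H+\sigma''_{\bar{H}}$, where $\sigma''_H$ and $\sigma''_{\bar{H}}$ are defined by (\ref{exprsigma}) with $\pi$ replaced by $\pi_H$, respectively $\pi_{\bar{H}}$.

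The substantive step is to verify that $\sigma''_H$ sends $\chi^{a,b,c}(M)$ into $\chi^{a,b+1,c}(M)$; the conjugate statement for $\sigma''_{\bar{H}}$ follows analogously. For this, I would evaluate $\sigma''_H w$ on $a$ arguments in $ann\,P$ together with $b'$ in $H^*$ and $c'$ in $\bar{H}^*$, where $b'+c'=b+c+1$, and show that the result vanishes unless $(b',c')=(b+1,c)$. The key ingredients would be: (a) $\sharp_{\pi_H}(\bar{H}^*\oplus Q^*)=0$ and $\sharp_{\pi_H}(H^*)\subseteq H$; (b) $\{\alpha,\beta\}_{\pi_H}\in ann\,P$ for $\alpha\in ann\,P$, $\beta\in ann\,Q$, using $\sharp_{\pi_H}\alpha=0$, $\pi_H(\alpha,\beta)=0$, and the first condition of (\ref{bunCRFPs}); (c) $\{\mu,\mu'\}_{\pi_H}$ has vanishing $\bar{H}^*$-component for $\mu,\mu'\in H^*$; (d) $\{\mu,\bar{\nu}\}_{\pi_H}=L_{\sharp_{\pi_H}\mu}\bar{\nu}\in\bar{H}^*$ for $\mu\in H^*$, $\bar{\nu}\in\bar{H}^*$, using $[H,H]\subseteq H$ and $[H,Q^c]\subseteq H\oplus Q^c$ from (\ref{clasintegr1}); and (e) $\{\bar{\nu},\bar{\nu}'\}_{\pi_H}=0$ for $\bar{\nu},\bar{\nu}'\in\bar{H}^*$. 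Going through the three sums of (\ref{exprsigma}) for $\sigma''_H$ and distinguishing sub-cases according to the types of $\beta_h,\beta_k$, one checks that every nonzero contribution forces $(b',c')=(b+1,c)$; any $Q^*$-component arising in (c) is harmless because feeding it into a $\beta$-slot of $w$ would require $w$ to be evaluated on $a+1$ arguments in $ann\,P$, which vanishes.

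The algebraic relations then follow automatically from the triple-grading. Since $\sigma''^2=0$ by (\ref{propsigma}), one has on $\chi^{a,b,c}(M)$,
\begin{equation*}
0=(\sigma''_H+\sigma''_{\bar{H}})^2 =(\sigma''_H)^2+(\sigma''_H\sigma''_{\bar{H}}+\sigma''_{\bar{H}}\sigma''_H) +(\sigma''_{\bar{H}})^2,
\end{equation*}
and the three summands land in the pairwise distinct direct summands $\chi^{a,b+2,c}(M)$, $\chi^{a,b+1,c+1}(M)$, $\chi^{a,b,c+2}(M)$ of $\chi^{a,b+c+2}(M)$, forcing each group to vanish separately.

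The hardest point is claim (c), the vanishing of the $\bar{H}^*$-component of $\{\mu,\mu'\}_{\pi_H}$ for $\mu,\mu'\in H^*$. Since $\pi_{\bar{H}}(H^*,\cdot)=0$ one has $\{\mu,\mu'\}_\pi=\{\mu,\mu'\}_{\pi_H}$; taking $\mu=\kappa^m$, $\mu'=\kappa^n$ in the dual basis of Proposition \ref{proppilocal} and evaluating on $\bar{h}_l\in\bar{H}$, the relation $\bar{h}_l(\pi^{mn})=0$ kills the $d(\pi_H(\mu,\mu'))$ term, and the Lie-derivative contributions reduce to $\pi^{mj}B^n_{lj}-\pi^{nj}B^m_{lj}$ with $B^n_{lj}h_n=[\bar{h}_l,h_j]_H$. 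The integrability relation $(L_{\bar{h}_l}\pi)(\kappa^m,\kappa^n)=0$ from (\ref{bunCRFPs}) expands to $\pi^{jn}B^m_{lj}+\pi^{mj}B^n_{lj}=0$, which by antisymmetry of $\pi^{ij}$ is exactly $\pi^{mj}B^n_{lj}=\pi^{nj}B^m_{lj}$, giving the required vanishing.
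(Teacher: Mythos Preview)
Your overall strategy is sound and is essentially the same as the paper's: both arguments reduce to the single crucial fact that the $\bar H^*$-component of $\{\kappa^m,\kappa^n\}_\pi$ vanishes (your claim (c)), and both derive this from the second integrability condition in (\ref{bunCRFPs}). The paper evaluates $\sigma''w$ directly on test arguments $(\xi^{l},\kappa^{i},\bar\kappa^{j})$ rather than first splitting $\pi=\pi_H+\pi_{\bar H}$, but this is an organizational difference, not a mathematical one. Your derivation of the bicomplex relations from $\sigma''^2=0$ and the tri-grading is cleaner than what the paper writes.

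However, your computation in (c) contains a genuine error. You assert $\bar h_l(\pi^{mn})=0$ without justification; this is not a consequence of anything proved so far and is false in general (it would hold, e.g., if $P$ were a foliation with holomorphic leaves, but not otherwise). You then use this both to ``kill the $d(\pi_H(\mu,\mu'))$ term'' and to expand $(L_{\bar h_l}\pi)(\kappa^m,\kappa^n)=0$ as $\pi^{jn}B^m_{lj}+\pi^{mj}B^n_{lj}=0$. In fact the correct expansions are
\[
\{\kappa^m,\kappa^n\}_\pi(\bar h_l)=\bar h_l(\pi^{mn})+\pi^{mj}B^n_{lj}-\pi^{nj}B^m_{lj},
\qquad
(L_{\bar h_l}\pi)(\kappa^m,\kappa^n)=\bar h_l(\pi^{mn})+\pi^{jn}B^m_{lj}+\pi^{mj}B^n_{lj},
\]
and these two expressions coincide (use $\pi^{jn}=-\pi^{nj}$). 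So the integrability condition $(L_{\bar h_l}\pi)(\kappa^m,\kappa^n)=0$ gives the desired vanishing directly, with the $\bar h_l(\pi^{mn})$ term still present; there is no need (and no justification) for setting it to zero separately. This is precisely how the paper argues: it establishes the identity $\{\kappa_i,\kappa_j\}_\pi(U)=(L_U\pi)(\kappa_i,\kappa_j)$ and then invokes (\ref{bunCRFPs}). Once you correct this step, your proof goes through.
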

\begin{proof}
We shall use the second formula (\ref{exprsigma}) in order to compute $\sigma''$, which has type $(0,1)$, for corresponding arguments as follows:
$$\begin{array}{l}
(\sigma''w)(\xi^{l_1},...,\xi^{l_a},\kappa^{i_1},...,\kappa^{i_{b+e}},\bar{\kappa}^{j_1},
...,\bar{\kappa}^{j_{c-e+1}}),\;\;e=1,...,c+1,\vspace*{2mm}\\
(\sigma''w)(\xi^{l_1},...,\xi^{l_a},\kappa^{i_1},...,\kappa^{i_{b-f+1}},\bar{\kappa}^{j_0},
...,\bar{\kappa}^{j_{c+f}}),\;\;f=1,...,b+1.\end{array}$$

Formula (\ref{exprsigma}) straightforwardly shows that the resulting value is zero if either $e>2$ or $f>2$. We shall prove the same for $e=2,f=2$; the two cases are similar and we give the details for $e=2$ only. Then, the first term of the right hand side of the expression (\ref{exprsigma}) of $\sigma''$ vanishes since the involved value of $w$ does not have the right number of arguments. The second term vanishes because
$\{\xi^l,\kappa^i\}_\pi,\{\xi^l,\bar{\kappa}^i\}_\pi\in ann\,P$	 imply the addition of one more argument in $ann\,P$ and, again, the number of arguments is not the one that yields non-zero values of $w$.
For the same reason, we have to replace $\{\beta_i,\beta_j\}_\pi$ by $pr_{ann\,Q}\{\beta_i,\beta_j\}_\pi$ in the last term of the same expression of $\sigma''$.

In our case, $ann\,Q=H^*+\bar{H}^*$ and we shall get (\ref{descsigma2}) and all the required conclusions if we show that $pr_{H^*}pr_{ann\,Q}\{\kappa_i,\kappa_j\}_\pi=0$. This condition is easily seen to be equivalent to the vanishing of $\{\kappa_i,\kappa_j\}_\pi(U)$ $\forall U\in\bar{H}$. We have
$$\begin{array}{lcl}
\{\kappa_i,\kappa_j\}_\pi(U)&=&<L_{\sharp_\pi\kappa_i}\kappa_j,U> -<L_{\sharp_\pi\kappa_j}\kappa_i,U>-U(\pi(\kappa_i,\kappa_j)\vspace*{2mm}\\	 &=&-<\kappa_j,[\sharp_\pi\kappa_i,U]>+<\kappa_j,[\sharp_\pi\kappa_i,U]>.\end{array}$$
Since a straightforward calculation leads to the relation
$$<L_{\sharp_\pi\kappa_i}\kappa_j,U>+<L_U\kappa_j,\sharp_\pi\kappa_i>=U(\pi(\kappa_i,\kappa_j)),$$
we get
$$\begin{array}{lcl}
\{\kappa_i,\kappa_j\}_\pi(U)&=&-<L_U\kappa_j,\sharp_\pi\kappa_i> -<L_U\kappa_i,\sharp_\pi\kappa_j>-U(\pi(\kappa_i,\kappa_j))
\vspace*{2mm}\\&=&(L_U\pi)(\kappa_i,\kappa_j)=0,
\end{array}$$
because of the second integrability condition (\ref{bunCRFPs}).
\end{proof}

\vspace*{5mm}
{\small Department of Mathematics, University of Haifa, Israel, vaisman@math.haifa.ac.il}
\end{document}